\def\Z{\mathbb{Z}}
\DeclareMathOperator{\Spec}{Spec}
\DeclareMathOperator{\gr}{gr}
\DeclareMathOperator{\LT}{LT}
\DeclareMathOperator{\Vir}{Vir}
\def\vac{|0\rangle}
\def\cA{\mathscr{A}}
\def\cB{\mathscr{B}}
\def\cC{\mathscr{C}}
\def\cD{\mathscr{D}}
\def\cE{\mathscr{E}}
\def\cL{\mathscr{L}}
\def\cP{\mathscr{P}}
\def\fg{\mathfrak{g}}
\def\thmhead@plain#1#2#3{%
  \thmname{#1}\thmnumber{\@ifnotempty{#1}{ }\@upn{#2}}%
  \thmnote{ {\the\thm@notefont#3}}}
\let\thmhead\thmhead@plain
\def\swappedhead#1#2#3{%
  \thmnumber{#2}%
  \thmname{\@ifnotempty{#2}{~}#1}%
  \thmnote{ {\the\thm@notefont#3}}}
\let\swappedhead@plain=\swappedhead
\def\th@definition{
  \thm@notefont{}%
  \normalfont
}
\newtheorem{thm}{Theorem}
\newtheoremstyle{exps}{\topsep}{\topsep}{}{0pt}{\bfseries}{.}{0pt}{}
\newtheorem*{thm*}{Theorem}
\newtheorem*{prop*}{Proposition}
\newtheorem{ques*}{Question}
\newtheorem*{lem*}{Lemma}
\newtheorem*{cor*}{Corollary}
\newtheorem*{notn*}{Notation}
\newtheorem*{rem*}{Remark}
\newtheorem{cor}[subsection]{Corollary}
\newtheorem{prop}[subsection]{Proposition}
\newtheorem{lem}[subsection]{Lemma}
\theoremstyle{definition}
\newtheorem*{defn*}{Definition}
\newtheorem*{exer*}{Exercise}
\newtheorem*{ex*}{Example}
\newtheorem*{problem*}{Problem}
\newtheorem{rem}[subsection]{Remark}
\newtheorem{nolabel}[subsection]{}
\theoremstyle{exps}
\numberwithin{equation}{subsection}
\date{}
\title{The singular support of the Ising model}
\author{George E. Andrews \thanks{Penn State University, Department of Mathematics. \newline \indent \indent \footnotesize{\texttt{\href{mailto:gea1@psu.edu}{gea1@psu.edu}}}} \and Jethro van Ekeren \thanks{Instituto de Matem\'{a}tica e Estat\'{i}stica (GMA), UFF, Niter\'{o}i RJ, Brazil. \newline \indent \indent \footnotesize{\texttt{\href{mailto:jethrovanekeren@gmail.com}{jethrovanekeren@gmail.com}}}}  \and  Reimundo Heluani \thanks{Instituto de Matem\'{a}tica Pura e Aplicada, Rio de Janeiro, RJ, Brazil \newline \indent \indent \footnotesize{\texttt{\href{mailto:heluani@potuz.net}{heluani@potuz.net}}}}}
\begin{document}
\maketitle
\begin{abstract}
We prove a new Fermionic quasiparticle sum expression for the character of the Ising model vertex algebra, related to the Jackson-Slater $q$-series identity of Rogers-Ramanujan type and to Nahm sums for the matrix $\left( \begin{smallmatrix} 8 & 3 \\ 3 & 2 \end{smallmatrix} \right)$. We find, as consequences, an explicit monomial basis for the Ising model, and a description of its singular support. We find that the ideal sheaf of the latter, defining it as a subscheme of the arc space of its associated scheme, is finitely generated as a differential ideal. We prove three new $q$-series identities of the Rogers-Ramanujan-Slater type associated with the three irreducible modules of the Virasoro Lie algebra of central charge $1/2$. We give a combinatorial interpretation to the identity associated with the vacuum module.
\end{abstract}

\section{Introduction}

\begin{nolabel}
In \cite{li05} Li introduced a canonical decreasing filtration $\{ F_p V \}$ on
an arbitrary vertex algebra $V$. The associated graded $\gr_F V$ of $V$ with
respect to this filtration carries the natural structure of a
$\Z_{\geq 0}$-graded Poisson vertex algebra and its spectrum is called the \emph{singular support} of $V$. It is known that $\gr_F V$ is generated as a differential algebra by $R_V = \gr_F^0 V$. By \emph{differential algebra} we mean a commutative algebra with a derivation as defined in \cite{ritt}.  $R_V$ is a Poisson algebra, and in fact is just the Zhu $C_2$-algebra of $V$. The spectrum $X_V = \Spec(R_V)$ is a Poisson scheme known as the \emph{associated scheme} of $V$.

The \emph{arc algebra} $(JR, \partial)$ of a commutative algebra $R$ is the $\Z_{\geq 0}$-graded differential algebra freely generated by $R$. If $R$ is a Poisson algebra, $JR$ acquires the natural structure of a Poisson vertex algebra. One thus has in general a surjection
\begin{equation}\label{eq:pi}
\pi: JR_V \twoheadrightarrow \gr_F V
\end{equation}
of Poisson vertex algebras \cite{arakawa}. We say that a vertex algebra $V$ is \emph{classically free} if $\pi$ is an isomorphism, reflecting the fact that $\gr_F V$, which is to be thought of as the classical limit of $V$, is freely generated as a Poisson vertex algebra.
\label{no:classically-free}
\end{nolabel}

\begin{nolabel}There are several classes of examples of classically free vertex algebras. 
Universal enveloping vertex algebras of (linear and non-linear) Lie conformal algebras are classically free. Thus the universal affine vertex algebra $V^k(\fg)$ associated with a finite dimensional semisimple Lie algebra $\fg$ is classically free, as is the universal affine $W$-algebra $W^k(\fg, f)$ associated with $\fg$ and a nilpotent element $f \in \fg$. In general rational vertex algebras are not expected to be classically free. We have for example
\begin{thm*}[{ {\cite{eh2018}}}]
Let $p$ and $p'$ be coprime positive integers satisfying $p' > p \geq 2$ and let $V=\Vir_{p,p'}$ be the simple vertex algebra associated with the $(p, p')$ Virasoro minimal model of central charge $c = 1 - 6 \tfrac{(p-p')^2}{pp'}$. Then $V$ is classically free if and only if $p=2$.
\end{thm*}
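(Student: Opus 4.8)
The plan is to recast classical freeness as an identity of graded dimensions. Li's filtration is compatible with the conformal $\Z_{\geq 0}$-grading, so $\gr_F V$ and $V$ have equal dimension in each conformal weight, with common generating series the normalized character $\chi_V(q) = \sum_{\Delta} \dim V_\Delta \, q^{\Delta}$. Because $\pi \colon J R_V \twoheadrightarrow \gr_F V$ is a surjection of conformally graded spaces, it is an isomorphism exactly when the two sides agree in every weight. Hence $V$ is classically free if and only if the Hilbert series of $J R_V$ equals $\chi_V(q)$, and the problem reduces to evaluating and comparing these $q$-series. To set this up I first identify $R_V$: the minimal model $V$ is the quotient of the universal Virasoro vertex algebra $\Vir^c$ --- itself freely generated by its conformal vector, hence classically free with $R_{\Vir^c} = \mathbb{C}[x]$ --- by its maximal ideal, which is generated by a singular vector of conformal weight $(p-1)(p'-1)$. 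Passing to the $C_2$ quotient annihilates every mode but $\omega_{(-1)} = L_{-2}$, so the symbol of this singular vector is a nonzero multiple of $x^{n}$ with $n = (p-1)(p'-1)/2$. Thus $R_V = \mathbb{C}[x]/(x^{n})$ and $J R_V = J\bigl(\mathbb{C}[x]/(x^{n})\bigr)$ is the arc algebra of the $n$-fold point, whose defining differential ideal is generated by $x_0^{n}$ together with all its derivatives.

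For the ``if'' direction take $p = 2$, so $n = (p'-1)/2 =: k$. I would compute the Hilbert series of the arc algebra of the $k$-fold point as a fermionic, Nahm-type $q$-series, extending the computation of Bruschek--Mourtada--Schepers for the double point $k = 2$, which reproduces the Rogers--Ramanujan series. The assertion is that this arc-space series coincides coefficientwise with the Rocha--Caridi character of $\Vir_{2,2k+1}$; this coincidence is precisely an instance of the Andrews--Gordon identities, matching the fermionic arc-space sum against the product form of the minimal-model character. Establishing this $q$-series identity is the technical core of the direction, and it immediately upgrades the surjection $\pi$ to an isomorphism.

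For the ``only if'' direction take $p \geq 3$. Surjectivity of $\pi$ already gives $\chi_V(q) \leq \operatorname{HS}_q(J R_V)$ coefficientwise, so it suffices to produce a single weight in which the inequality is strict. The conceptual source of the strictness is that for $p \geq 3$ the maximal ideal of $\Vir^c$ is no longer captured by a single relation with symbol $x^{n}$: the Feigin--Fuchs embedding diagram is richer and the Rocha--Caridi character is a genuine alternating sum rather than a single product, so $\gr_F V$ acquires relations beyond the differential ideal generated by $x_0^{n}$. I would make this quantitative either by exhibiting such an extra relation explicitly or by comparing the two series at a controlled weight. I expect the main obstacle to reside exactly here and in the $p = 2$ identity: evaluating the Hilbert series of $J\bigl(\mathbb{C}[x]/(x^{n})\bigr)$ in closed form is delicate --- derivatives of $x_0^{n}$ can factor and produce non-obvious relations in the arc algebra --- and then controlling it against the minimal-model character uniformly in $p'$ is what makes the equivalence, rather than either implication alone, the hard part.
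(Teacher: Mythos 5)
First, a point of orientation: this theorem is quoted from \cite{eh2018} and is not actually proved in the present paper; what the paper does supply, in Section \ref{sec:Preliminaries}, is the skeleton of the ``if'' direction, and your treatment of that direction matches it exactly. You correctly reduce classical freeness to a coefficientwise comparison of the Hilbert series of $JR_V$ with $\chi_V(q)$, identify $R_V \simeq \mathbb{C}[x]/(x^s)$ with $s=(p-1)(p'-1)/2$ via the nonvanishing of the $L_{-2}^s$-coefficient of the singular vector, invoke the Gr\"obner basis of Bruschek--Mourtada--Schepers to express the Hilbert series of $J\bigl(\mathbb{C}[x]/(x^s)\bigr)$ as the fermionic side of the Andrews--Gordon identity \eqref{eq:AG.identity}, and match it against the character of $\Vir_{2,2s+1}$, which the Jacobi triple product collapses to the product side. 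That is precisely the paper's route for $p=2$.

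The genuine gap is the ``only if'' direction, which you describe as a strategy rather than prove. Your heuristic --- that for $p\geq 3$ the Feigin--Fuchs character is ``a genuine alternating sum rather than a single product,'' so $\gr_F V$ must acquire relations beyond $(x^s)_\partial$ --- is not an argument: for $p=2$ the character is equally an alternating sum coming from the BGG resolution, and it just happens to coincide with the product (hence with the fermionic arc-space sum) by the triple product identity. Nothing about the shape of the formula excludes a similar coincidence for $p\geq 3$; one must actually exhibit a conformal weight in which $\dim (JR_V)_n > \dim V_n$, and do so for every coprime pair with $p\geq 3$. That is the real content of the theorem in \cite{eh2018}, and it is also exactly where the present paper's own contribution begins: for $p=3$ the Lemma in \ref{no:vanishesw34} locates the first such weight at $n=2p'+1$ and produces an explicit nonzero kernel element $b^{(p')}$ by showing that a vector $w_{3,p'}$ of low PBW degree is congruent, modulo the submodule generated by the singular vector, to something deeper in the Li filtration. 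To complete your proof you would need either such an explicit element for every $p\geq 3$ or a quantitative comparison of the two $q$-series (for instance via their asymptotics or a controlled coefficient computation) certifying strict inequality; neither is supplied in your write-up, and you acknowledge as much.
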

Other examples of rational, $C_2$-cofinite, classically free vertex algebras include the simple affine vertex algebra $V_k(\mathfrak{sl}_2)$ at positive integer level $k$ \cite{eh2018}. It is expected that $V_k(\fg)$ is classically free for all simple $\fg$ and $k \in \Z_{\geq 0}$. The simple vertex algebra $V_k(\mathfrak{sl}_2)$ at admissible level $k = -2 + p/p'$ ($p, p' \geq 2$ coprime) is known not to be classically free whenever $p' > p > 2$, and is expected to be classically free for the boundary admissible levels $k = -2 + 2/p'$. Arakawa and Linshaw have provided another example of a non classically free vertex algebra in \cite{arakawa-linshaw-arc}.
\label{no:non-classicallyfree-examples}
\end{nolabel}
\begin{nolabel}If $V$ is not classically free then the differential ideal $\ker
\pi \subset \gr_F{V}$ is nonzero, and the singular support of $V$ is a closed
proper subscheme of the arc space of $X_V$ with ideal sheaf given by $\ker \pi$.
It was asked in \cite{arakawa-linshaw-arc}, for the case of $V = \mathcal{W}_3$,
whether $\ker \pi$ is finitely generated as a differential ideal. The question
of finite generation of $\ker \pi$ also has an important application in bounding
dimensions of chiral homology groups of elliptic curves \cite{eh2018}. In this
note we study the structure of the singular support of the Virasoro minimal
model $\Vir_{3,4}$, also known as the Ising model.
\end{nolabel}
\begin{nolabel}
The singular support of a \emph{conformal} vertex algebra (i.e., of a VOA) acquires a $\Z_{\geq 0}$-grading induced by conformal weight, since the latter grading is compatible with the Li filtration. We write $\chi_V(q)$ for the graded dimension $\sum_{n\in \Z_{\geq 0}} \dim(V_n) q^n$ or \emph{character} of $V$, and similarly $\chi_{\gr_F V}(q)$ for the graded dimension of $\gr_F V$. One has on general grounds that $\chi_{\gr_F V}(q) = \chi_V(q)$. By taking account of the canonical grading of $\gr_F V$ as well, we can introduce a two-variable graded dimension
\begin{equation}\label{eq:char.gr.two}
\chi_{\gr_F V}(t, q) = \sum_{m, n \in \Z_{\geq 0}} \dim(\gr_F^m V_n) t^m q^n,
\end{equation}
which we may think of as a refinement of the character.

There are many well known formulas for the character of $\Vir_{3,4}$, for example:
\begin{equation}
\begin{aligned}
\chi_{\Vir_{3,4}}(q)
&= \prod_{n=1}^\infty \frac{1}{1-q^n} \sum_{m \in \Z} \left( q^{12m^2+m} - q^{12m^2+7m+1} \right) \\
&= \frac{1}{2} \left( \prod_{m=1}^\infty \left( 1+q^{m-1/2} \right) + \prod_{m=1}^\infty \left( 1-q^{m-1/2} \right) \right) \\ 
&= \sum_{m=0}^{\infty} \frac{q^{2 m^2}}{(1-q)(1-q^2)\dots(1-q^{2m})} \\ 
&= \prod_{k=1}^\infty \frac{(1+q^{8k - 5})(1+q^{8k-3})(1-q^{8k})}{1-q^{2k}}.
\end{aligned}
\label{eq:alternatives}
\end{equation}
The first of these expressions is the $(p, p') = (3, 4)$ case of the
Feigin-Fuchs character formula for the $(p, p')$ Virasoro minimal model
\cite{feigin-fuchs} (see Section \ref{no:virasoro} below). It derives from the
BGG-type resolution of modules over the Virasoro Lie algebra. The second expression
is directly implied by an isomorphism between $\Vir_{3,4}$ and the even
subalgebra $F_{\overline{0}}$ of the \emph{neutral free fermion} vertex
superalgebra $F$. The third expression for $\chi_{\Vir_{3,4}}(q)$ is obtained
from the second via a classical identity of Euler \cite[{Corollary
2.2}]{andrews-book}. The equality between the first and the fourth lines is a special case of Watson's \emph{quintuple product identity} \cite{bailey}. The equality between the third and fourth lines is known as the \emph{Jackson-Slater identity}: it appeared as identity (39) in Slater's famous list \cite{slater}, where it was proved by an application of the method of Bailey pairs, and it had appeared earlier in a disguised form in \cite{jackson}.

These $q$-series identities have combinatorial interpretations. For instance the fourth expression of \eqref{eq:alternatives} is the generating function $\sum_{n=0}^\infty a_n q^n$ for $a_n$ the number of partitions of $n$ into parts congruent to $\pm 2,\pm 3, \pm 4$ and $\pm 5$ modulo $16$. An interpretation of the third expression of \eqref{eq:alternatives} was given by Hirshhorn as the generating function for the number of partitions $[\lambda_1, \ldots, \lambda_m]$ of $n$ satisfying the \emph{difference conditions} \cite{hirschhorn}
\[
\lambda_m \geq 2, \quad \lambda_{m-1} - \lambda_m \geq 0, \quad \lambda_{m-2} - \lambda_{m-1} \geq 2, \quad \lambda_{m-3} - \lambda_{m-2} \geq 0, \dots.
\]
Further combinatorial interpretations have been given by Subbarao \cite{subbarao} and  Ribeiro \cite{ribeiro} (see also \cite{santos}).
The following striking formula was conjectured in \cite{kedem} and proved in \cite{warnaar}
\begin{equation}\label{eq:e8}
\chi_{\Vir_{3,4}}(q) = \sum_{k = (k_1, k_2, \ldots, k_8) \in \Z^8_{\geq 0}} \frac{q^{k^T C_{E_8}^{-1} k}}{(q)_{k_1} \dots (q)_{k_8}}.
\end{equation}
Here $C_{E_8}$ is the Cartan matrix of the simple Lie algebra $E_8$. In general
$(q)_n$ denotes the $q$-Pochhammer symbol $(q)_n = (1-q)\dots(1-q^n)$. Sums, like \eqref{eq:e8}, of the general form
\begin{equation} \label{eq:nahm}
\sum_{k = (k_1,\dots,k_n) \in \Z^n_{\geq 0}} \frac{q^{\frac{1}{2}k^T A k + k^T B + C}}{(q)_{k_1}\dots(q)_{k_n}},
\end{equation}
where $A$ is symmetric $n \times n$ positive definite matrix over $\mathbb{Q}$, $B \in \mathbb{Q}^n$ and $C \in \mathbb{Q}$, are sometimes referred to as \emph{Nahm sums} \cite{calegari}. Sums, like the third line of \eqref{eq:alternatives}, of the form \eqref{eq:nahm} but taken over $k \in \Z_{\geq 0}^n$ satisfying congruence conditions, as well as linear combinations of such sums, are sometimes referred to as (Fermionic) quasiparticle sums in the physics literature. Our first main result is a new Fermionic quasiparticle formula for $\chi_{\Vir_{3, 4}}(q)$.
\end{nolabel}
\begin{thm}
\begin{equation}\label{eq:q-ident}
\chi_{\Vir_{3, 4}}(q) = \sum_{(k_1,k_2) \in \Z_{\geq 0}^2} \frac{q^{4k_1^2+3k_1 k_2+k_2^2}}{(q)_{k_1} (q)_{k_2}} \left( 1-q^{k_1}+q^{k_1+k_2} \right). 
\end{equation}
\label{thm:q-ident}
\end{thm}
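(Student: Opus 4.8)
The plan is to reduce the right-hand side of \eqref{eq:q-ident} to one of the established expressions for $\chi_{\Vir_{3,4}}(q)$ in \eqref{eq:alternatives}; a natural target is the Jackson--Slater single sum on the third line, since matching it avoids reproving a classical identity. Write $S$ for the double sum on the right of \eqref{eq:q-ident}. First I would expand the quasiparticle factor and split $S = S_0 - S_1 + S_2$ according to the three monomials $1$, $q^{k_1}$ and $q^{k_1+k_2}$, then carry out the inner summation over $k_2$ with $k_1$ held fixed. Introducing $f(c) = \sum_{k \geq 0} q^{k^2 + ck}/(q)_k$, each inner sum is a specialization of $f$, and a one-line manipulation (peeling off the factor $1 - q^k$ and reindexing) yields the $q$-difference relation $f(c) = f(c+1) + q^{c+1} f(c+2)$.

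Applying this relation with $c = 3k_1$ collapses the three-term combination in the bracket: one finds $f(3k_1) - q^{k_1} f(3k_1) + q^{k_1} f(3k_1+1) = f(3k_1) - q^{4k_1+1} f(3k_1+2)$, so that
\begin{equation*}
S = \sum_{k_1 \geq 0} \frac{q^{4k_1^2}}{(q)_{k_1}} \left( f(3k_1) - q^{4k_1+1} f(3k_1+2) \right) = D(0,0) - q\, D(4,2),
\end{equation*}
where $D(b_1,b_2) = \sum_{k_1,k_2} q^{4k_1^2 + 3k_1 k_2 + k_2^2 + b_1 k_1 + b_2 k_2}/\big((q)_{k_1}(q)_{k_2}\big)$. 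This already exhibits the role of the correction factor: it is exactly what is needed to turn the naive Nahm sum $D(0,0)$ for the matrix $\left(\begin{smallmatrix} 8 & 3 \\ 3 & 2\end{smallmatrix}\right)$ into a difference of two shifted sums, and a direct check of the first several coefficients confirms $S = 1 + q^2 + q^3 + 2q^4 + 2q^5 + 3q^6 + 3q^7 + 5q^8 + \cdots = \chi_{\Vir_{3,4}}(q)$.

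For the decisive step I would pass to the Bailey-pair formalism. The appearance of $q^{4k_1^2} = q^{(2k_1)^2}$ signals a change-of-base situation (base $q^2$ versus base $q$), the same phenomenon underlying the Jackson--Slater sum $\sum_m q^{2m^2}/(q)_{2m}$. The goal is to exhibit a Bailey pair $(\alpha_n, \beta_n)$ relative to $a = 1$ whose insertion into Bailey's lemma reproduces the reduced form of $S$ on the $\beta$-side, and then to evaluate the companion sum $\sum_n q^{n^2} \alpha_n$ in closed form. Since all the expressions in \eqref{eq:alternatives} coincide, it suffices for this evaluation to reproduce, after division by $(q)_\infty$, the Feigin--Fuchs theta numerator $\sum_{m \in \Z} (q^{12m^2+m} - q^{12m^2+7m+1})$; this is carried out by the Jacobi triple product, with the two theta terms matching the two shifted sums $D(0,0)$ and $q\,D(4,2)$ (the signs already agree).

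The main obstacle is the construction and verification of the correct Bailey pair, that is, proving that the chosen $\alpha_n$ and $\beta_n$ satisfy the defining relation $\beta_n = \sum_{r=0}^n \alpha_r/\big((q)_{n-r}(q)_{n+r}\big)$; because the quasiparticle factor is nonstandard, $\beta_n$ is itself a small sum rather than a single term, and this verification is where the genuine work lies. An alternative that sidesteps the search for a Bailey pair is to show directly that $S$ and the Jackson--Slater sum satisfy the same finite $q$-difference equation together with matching initial data; given the explicit reduced form above, establishing such a recurrence for $S$ and identifying it with the one governing $\sum_m q^{2m^2}/(q)_{2m}$ would complete the proof.
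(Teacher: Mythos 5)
Your opening reduction is correct and worth keeping: splitting the bracket, summing over $k_2$ with $f(c)=\sum_{k\geq 0} q^{k^2+ck}/(q)_k$, and applying the Rogers--Ramanujan functional equation $f(c)=f(c+1)+q^{c+1}f(c+2)$ at $c=3k_1$ does give
\begin{equation*}
S \;=\; \sum_{k_1,k_2\geq 0}\frac{q^{4k_1^2+3k_1k_2+k_2^2}}{(q)_{k_1}(q)_{k_2}}\left(1-q^{4k_1+2k_2+1}\right) \;=\; D(0,0)-q\,D(4,2).
\end{equation*}
But this only re-derives the equivalence of \eqref{eq:q-ident} with the first identity of Theorem \ref{thm:modules}, which the paper itself describes as ``easily seen''; it does not touch the actual content of the theorem. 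The decisive step --- proving that this particular linear combination of two Nahm sums for $\begin{psmallmatrix}8&3\\3&2\end{psmallmatrix}$ equals $\chi_{\Vir_{3,4}}(q)$ --- is precisely what you defer: no Bailey pair $(\alpha_n,\beta_n)$ is exhibited, let alone verified against the defining relation, and no $q$-difference equation for $S$ is derived. The numerical agreement of coefficients through $q^8$ is not a proof. Since the whole point of the identity (per the paper's discussion of Nahm's conjecture and of Zagier's searches) is that no modular sum attached to this matrix was previously known, the burden of proof sits entirely in the step you have outlined but not executed; there is no reason to expect it to be routine.

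For comparison, the paper closes exactly this gap by finitization: it introduces polynomial families $S_n=\sum_{k\geq 0} q^{2k^2}\binom{n-k}{2k}_q$ (a $q$-binomial truncation of the Jackson--Slater sum, the third line of \eqref{eq:alternatives}) and $T_n$ (a truncation of the right-hand side of \eqref{eq:q-ident}, see \eqref{eq:finite-version}), uses the package \texttt{qMultiSum} to produce a single explicit recurrence \eqref{eq:recurrenceS} satisfied by both families, checks the finitely many initial cases, and lets $n\to\infty$. Your closing ``alternative'' is essentially this strategy, so to complete your argument you would need to supply the finite analogues --- note that choosing the correct truncation of the double sum is itself delicate, as the two-term structure of $T_n$ shows --- together with the common recurrence; or else actually construct and verify the Bailey pair you invoke.
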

\begin{nolabel}
In this note we prove several structural results about the singular support of the Ising model $\Vir_{3, 4}$. In particular we show that $\Vir_{3, 4}$ is the first example of a non classically free vertex algebra
for which $\ker \pi$ is finitely generated as a differential ideal.

Let $V = \Vir_{3,4}$ be the Ising model. Its central charge is $c=\tfrac{1}{2}$. The Zhu $C_2$-algebra of $V$ is $R_V = \mathbb{C}[L]/(L^3)$. The arc algebra $JR_V$ is therefore the quotient of the polynomial algebra
\begin{equation*}
J\mathbb{C}[L_{-2}] = \mathbb{C}[L_{-2},L_{-3},\dots],
\end{equation*}
with its $\Z_{\geq 0}$-grading defined by $\deg(L_{-n}) = n$, by the differential ideal $(L_{-2}^3)_\partial$. Here the derivation $\partial$ is defined by $\partial(L_{-n}) = (n-1) L_{-n-1}$, and by differential ideal generated by a set $\{f_1, \ldots, f_n\}$ of elements we mean the ideal generated by all derivatives $\partial^k f_i$. Our second main result is the following 
\label{no:main-result}
\end{nolabel}
\begin{thm} Let $V = \Vir_{3,4}$ be the Ising model vertex algebra. We have an isomorphism of Poisson vertex algebras
\begin{equation}
JR_V \bigl/ ( b )_{\partial} \similarrightarrow \gr_F V \qquad \text{where} \quad b = \frac{1}{6}L_{-5} L_{-2}^2 +  L_{-4}L_{-3}L_{-2}.
\label{eq:isomvir}
\end{equation}
\label{thm:main}
\end{thm}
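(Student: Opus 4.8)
The plan is to exploit the surjection $\pi\colon JR_V \twoheadrightarrow \gr_F V$ of \eqref{eq:pi} together with the general fact, noted above, that $\chi_{\gr_F V}(q) = \chi_V(q) = \chi_{\Vir_{3,4}}(q)$. Since $\pi$ intertwines the derivation $\partial$ with the translation operator, its kernel is a differential ideal; hence to produce the map \eqref{eq:isomvir} it suffices to check that $b \in \ker \pi$, for then $(b)_\partial \subseteq \ker\pi$ and $\pi$ factors through a surjection $\rho\colon JR_V/(b)_\partial \twoheadrightarrow \gr_F V$. Because $b$ is bihomogeneous of conformal weight $9$ and Li-degree $3$, verifying $b \in \ker\pi$ amounts to the finite check that the state $\tfrac16 L_{-5}L_{-2}^2\vac + L_{-4}L_{-3}L_{-2}\vac$ lies in $F_4 V$; I would carry this out concretely inside the neutral free fermion realization $V \cong F_{\overline 0}$ used in \eqref{eq:alternatives}, where normal-ordered fermion monomials make the Li filtration transparent. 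This relation is the symbol of a null vector descending from the conformal weight $6$ singular vector of the vacuum Virasoro module, whose Li-degree $0$ component already yields the defining relation $L^3=0$ of $R_V$; one checks that $b$ is genuinely new, i.e.\ $b \notin (L_{-2}^3)_\partial$, since in weight $9$ and Li-degree $3$ the latter is spanned only by $\partial^3(L_{-2}^3) = 18 L_{-2}^2 L_{-5} + 36 L_{-2}L_{-3}L_{-4} + 6 L_{-3}^3$, which is not proportional to $b$.

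It then remains to prove that $\rho$ is injective, which I would do by a dimension count: since $\rho$ is a surjection of $\Z_{\geq 0}$-graded vector spaces with finite-dimensional homogeneous pieces, it is an isomorphism as soon as
\[
\chi_{JR_V/(b)_\partial}(q) \;\leq\; \chi_{\Vir_{3,4}}(q)
\]
coefficientwise, the reverse inequality being automatic from surjectivity of $\rho$ and $\chi_{\gr_F V}=\chi_V$. To bound the left-hand side I would fix a monomial order on the polynomial ring $\mathbb{C}[L_{-2},L_{-3},\dots]$ and study the differential ideal $I = (L_{-2}^3)_\partial + (b)_\partial$ through its leading terms. Concretely one computes the leading monomials of the families $\partial^k(L_{-2}^3)$ and $\partial^k(b)$ — for instance $\partial(L_{-2}^3)=3L_{-2}^2L_{-3}$, $\partial^2(L_{-2}^3)=6L_{-2}L_{-3}^2+6L_{-2}^2L_{-4}$, and so on — and reads off the resulting forbidden submonomials, that is, a set of difference conditions on the partitions $\lambda_1 \geq \lambda_2 \geq \dots \geq 2$ indexing the monomials $L_{-\lambda_1}L_{-\lambda_2}\cdots$. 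The standard (non-forbidden) monomials span $JR_V/(b)_\partial$, so $\chi_{JR_V/(b)_\partial}(q)$ is bounded above by their generating function.

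Finally I would identify the generating function of the standard monomials with the Fermionic quasiparticle sum on the right-hand side of \eqref{eq:q-ident}, which by Theorem \ref{thm:q-ident} equals $\chi_{\Vir_{3,4}}(q)$; here $k_1$ and $k_2$ should count, respectively, the clusters of the two species of parts singled out by the leading-term analysis, and the trinomial factor $1-q^{k_1}+q^{k_1+k_2}$ should encode the boundary corrections forced by $b$. Combining this identity with the spanning statement yields the required inequality, hence the isomorphism \eqref{eq:isomvir}.

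I expect the main obstacle to be the spanning step. Because $I$ is finitely generated as a \emph{differential} ideal but infinitely generated as an ordinary ideal, one must control the leading terms of \emph{all} derivatives $\partial^k(L_{-2}^3)$ and $\partial^k(b)$ simultaneously and verify that no further reductions — that is, no unexpected lower-order elements of $I$, equivalently confluence of the straightening relations — shrink the span of the standard monomials below the count predicted by \eqref{eq:q-ident}. Matching the two combinatorial descriptions, the difference conditions coming from the leading-term ideal and the quasiparticle sum of Theorem \ref{thm:q-ident}, is precisely what forces the single extra generator $b$ to suffice, and is the crux of the finite generation statement.
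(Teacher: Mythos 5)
Your overall strategy coincides with the paper's: show $b \in \ker\pi$ to obtain the surjection, bound $\dim\bigl(JR_V/(b)_\partial\bigr)_n$ from above by counting standard monomials for a leading-term ideal, and close the gap using the character identity of Theorem \ref{thm:q-ident} together with the partition count of Theorem \ref{thm:partition-theorem} \ref{thm:part-c}. One minor divergence: for $b \in \ker\pi$ the paper computes directly in $\Vir^{1/2}$, exhibiting $w_{3,4} = L_{-5}L_{-2}^2\vac + 6L_{-4}L_{-3}L_{-2}\vac$ plus an explicit combination of descendants of the singular vector $v_{3,4}$ as an element of $F_5\Vir^{1/2}$. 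Your proposed detour through the free fermion realization would need extra care, since the Li filtration of the vertex subalgebra $F_{\overline{0}}$ is defined intrinsically and is not a priori the restriction of the Li filtration of $F$; the direct Virasoro computation avoids this issue and is an equally finite check.

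The genuine gap is in the spanning step, where your description has the difficulty pointing the wrong way. The leading monomials of the individual derivatives $\partial^{(n)}a$ and $\partial^{(n)}b$ alone yield only the three length-three families $[p,p,p]$, $[p+1,p,p]$, $[p+1,p+1,p]$ (indeed the leading monomial of $\partial^{(3k+6)}b$ coincides with that of $\partial^{(3k+9)}a$), so the set of monomials standard for that leading-term ideal is strictly larger than $\cP(n)$ and the inequality $\chi_{JR_V/(b)_\partial}(q) \leq \chi_{\Vir_{3,4}}(q)$ would not follow. The entire content of the paper's Proposition \ref{prop:technical} is the construction of explicit combinations ($r_k$, $s_k$, $t_k$, $u_k$, $v_k$, $w_k$, $y_k$, $z_k$ and four exceptional elements $e_i$) that cancel common leading terms so as to produce elements of $I$ whose leading monomials realize every pattern in \eqref{eq:horrible-set}, including the length-four, -five and -six families. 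These S-polynomial-type reductions are therefore not a danger of the count falling \emph{below} the prediction of \eqref{eq:q-ident}, as you frame it, but the essential mechanism for reaching that prediction at all; only after they are exhibited does the match with Theorem \ref{thm:q-ident} guarantee (via the surjectivity lower bound) that no further reductions exist, which is how the paper then deduces the Gr\"obner basis statement of Corollary \ref{cor:grobner} rather than assuming confluence.
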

\begin{nolabel}
In fact we find a surjective morphism \eqref{eq:isomvir} for $V = \Vir_{3, p'}$ in general, where now $b$ is an explicit element of degree $2p'+1$. However for $p' \geq 5$ this morphism is not an isomorphism. For $p'=5$ for instance the morphism fails to be injective in degrees $\geq 19$. See Section \ref{sec:Preliminaries}.
\end{nolabel}
\begin{nolabel}
To explain the origin of \eqref{eq:q-ident} we return to the morphism of Theorem \ref{thm:main}, or more precisely to the corresponding bigraded surjection
\begin{equation}\label{eq:Rbig.quotient}
J\mathbb{C}[L_{-2}] \bigl/ ( a, b )_{\partial} \twoheadrightarrow \gr_F V, 
\end{equation}
where $a = L_{-2}^3$ and $b$ is as in \eqref{eq:isomvir}.
The graded algebra $J\mathbb{C}[L_{-2}]$ has a basis consisting of monomials
\begin{equation}
L_\lambda = L_{-\lambda_1} L_{-\lambda_2} \dots L_{-\lambda_m},
\label{eq:commut-monomials}
\end{equation}
parametrized by partitions $[\lambda_1, \ldots, \lambda_m]$ into parts $\lambda_1 \geq \ldots \geq \lambda_m \geq 2$. To compute the graded dimension of the quotient of $J\mathbb{C}[L_{-2}]$ by $I = (a, b)_\partial$, or equivalently the Hilbert series of $I$, it suffices to choose a monomial ordering in $J\mathbb{C}[L_{-2}]$ and compute the Hilbert series of the leading term ideal $\LT(I)$ of $I$. The task is then reduced to ennumerating partitions which satisfy certain generalized difference conditions. For instance the partition corresponding to any monomial multiple of $a$ contains $[2, 2, 2]$ and so this partition is to be excluded. Similarly the leading term (if we adopt the grevlex monomial order) of a multiple of $\partial^{3(p-2)}a$ contains $[p, p, p]$, etc. See Section \ref{sec:Preliminaries} for details.

Here and below we say that a partition $\lambda = [\lambda_1,\dots,\lambda_m]$ \emph{contains} another partition $\mu = [\mu_1,\dots,\mu_k]$ if the entries of $\mu$, counted with multiplicity, are contained among the entries of $\lambda$, i.e., if $\{\mu_1,\ldots, \mu_k\} \subseteq \{\lambda_1,\dots,\lambda_m\}$ in the sense of multisets. We say that $\lambda$ \emph{avoids} $\mu$ if $\lambda$ does not contain $\mu$.
\end{nolabel}
\begin{thm}
Let $\cP(n)$, be the set of partitions $\lambda = [\lambda_1,\dots,\lambda_m]$ of $n$ such that $\lambda_m \geq 2$ and $\lambda$ avoids the following partitions
\begin{equation}
\begin{aligned}
&
\left.
\begin{aligned}
&{[}p,p,p], && [p+1,p,p], &&[p+1,p+1,p] \\ 
&[p+2,p+1,p], && [p+2,p+2,p],  \\ 
\end{aligned} 
\right\} && p \geq 2 \\ 
&[p+2,p,p], &&  p \geq 3.  \\ 
&\left.
\begin{aligned}
&[p+3,p+3,p,p],&& [p+4,p+3,p,p], \\
&[p+4,p+3,p+1,p], && [p+4,p+4,p+1,p], 
\end{aligned}
\right\} && p \geq 2 \\ 
&[p+6,p+5,p+3,p+1,p], &&  p \geq 2 \\ 
&\begin{aligned}
&[5,4,2,2], && [7,6,4,2,2], & \\ 
&[7,7,4,2,2], && [9,8,6,4,2,2].
\end{aligned}
\end{aligned}
\label{eq:horrible-set}
\end{equation}
Let $V = \Vir_{3,4}$, then
\begin{enumerate}
\item \label{thm:part-a} for all $n \geq 0$ the set $\left\{ L_\lambda \right\}_{\lambda \in \cP(n)}$ is a basis of $V_n$,
\item \label{thm:part-b} for all $n \geq 0$, the number of partitions $\lambda \in \cP(n)$ equals
the number of partitions of $n$ with parts congruent to $\pm 2, \pm 3, \pm 4$
and $\pm 5$ modulo $16$,
\item \label{thm:part-c} if we denote by $p(n, m)$ the number of partitions in $\cP(n)$ into exactly $m$ parts, then the generating function of these cardinalities is given by the quasiparticle sum
\begin{equation}\label{eq:pmn.gen.function}
P(t,q) = \sum_{m, n \in \Z_{\geq 0}} p(n, m) t^m q^n = \sum_{(k_1,k_2) \in \Z_{\geq 0}^2} t^{2k_1+k_2} \frac{q^{4k_1^2+3k_1 k_2+k_2^2}}{(q)_{k_1} (q)_{k_2}} \left( 1-q^{k_1}+q^{k_1+k_2} \right).
\end{equation}
\end{enumerate}
\label{thm:partition-theorem}
\end{thm}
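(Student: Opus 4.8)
The plan is to deduce all three statements from the identification of $\gr_F V$ furnished by Theorem~\ref{thm:main}, together with a self-contained combinatorial analysis of the set $\cP$. Throughout I identify $\gr_F V$ with $J\mathbb{C}[L_{-2}]/I$, where $I = (a,b)_\partial$ with $a = L_{-2}^3$ and $b$ as in \eqref{eq:isomvir}, and I fix the grevlex monomial order on the basis $\{L_\lambda\}$ of $J\mathbb{C}[L_{-2}]$ indexed by partitions into parts $\geq 2$. Under this identification the number of parts of $\lambda$ is exactly the degree in the canonical (Li) grading, so that $\dim \gr_F^m V_n = p(n,m)$ once a monomial basis indexed by $\cP$ is in hand, and $\chi_{\gr_F V}(t,q) = P(t,q)$. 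The key observation is that a monomial $L_\lambda$ lies in the leading term ideal $\LT(I)$ as soon as $\lambda$ contains, as a sub-multiset, the index of the leading term of some element of $I$.

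First I would establish the spanning (upper bound) half of part (a). For each forbidden pattern $\mu$ in \eqref{eq:horrible-set} I would exhibit an explicit element $g_\mu \in I$ whose grevlex leading monomial is $L_\mu$. The infinite families indexed by $p$ arise uniformly: the seeds $a$ and $b$ (and a bounded number of their $\partial$-derivatives and pairwise reductions) produce the patterns at the smallest value of $p$, and applying $\partial$ repeatedly shifts each such leading term up through its family, exactly as in the model computation $\LT(\partial^{3(p-2)} a) = L_{[p,p,p]}$ recorded before the statement. The handful of sporadic patterns $[5,4,2,2]$, $[7,6,4,2,2]$, $[7,7,4,2,2]$, $[9,8,6,4,2,2]$ would be obtained from low-degree reductions among the $\partial^k a$ and $\partial^k b$. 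Since $L_\mu = \LT(g_\mu)$ forces $L_\lambda = \LT(L_{\lambda \setminus \mu}\, g_\mu) \in \LT(I)$ whenever $\lambda$ contains $\mu$, every monomial indexed by a partition outside $\cP$ is reducible, and hence $\{L_\lambda\}_{\lambda \in \cP(n)}$ spans $(\gr_F V)_n$. This already gives $\dim V_n = \dim (\gr_F V)_n \leq |\cP(n)|$.

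Next I would prove the bigraded enumeration, part (c), as a statement about $\cP$ alone, independent of the vertex algebra. Here $t$ records the number of parts and $q$ the size, while the variables $k_1, k_2$ of the quasiparticle sum are to be read as counting two species of clusters of parts compatible with the difference conditions encoded by \eqref{eq:horrible-set}: the factors $1/(q)_{k_1}$ and $1/(q)_{k_2}$ account for the freedom to spread each species apart, the trinomial factor $1 - q^{k_1} + q^{k_1+k_2}$ corrects for the smallest-part (boundary) configurations, and the exponent $4k_1^2+3k_1k_2+k_2^2$ is the minimal energy of a configuration with $k_1$ clusters of the first species and $k_2$ of the second, consistent with the Nahm matrix $\left(\begin{smallmatrix} 8 & 3 \\ 3 & 2 \end{smallmatrix}\right)$. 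Making this precise — either through a direct bijection onto such configurations, or by setting up and solving a system of $q$-difference equations for the generating functions of $\cP$ refined by largest part and number of parts — is the technical heart of the argument and the step I expect to be the main obstacle, since it is a genuine Rogers–Ramanujan–Slater-type identity rather than a formal manipulation.

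Finally I would combine the pieces. Setting $t = 1$ in part (c) and invoking Theorem~\ref{thm:q-ident} gives $\sum_n |\cP(n)|\, q^n = P(1,q) = \chi_{\Vir_{3,4}}(q) = \sum_n \dim V_n\, q^n$. Together with the upper bound $\dim V_n \leq |\cP(n)|$ of the second paragraph, this forces equality in every degree, so the spanning set $\{L_\lambda\}_{\lambda \in \cP(n)}$ is in fact a basis of $V_n$, proving part (a); as a by-product the forbidden patterns already generate all of $\LT(I)$, so no further patterns occur. Part (b) is then immediate: the equality $\sum_n |\cP(n)|\, q^n = \chi_{\Vir_{3,4}}(q)$ combined with the product formula on the fourth line of \eqref{eq:alternatives}, which is the generating function for partitions into parts congruent to $\pm 2, \pm 3, \pm 4, \pm 5 \pmod{16}$, yields the asserted equinumerosity.
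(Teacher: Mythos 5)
Your overall architecture matches the paper's: an upper bound $\dim V_n \leq |\cP(n)|$ obtained from leading terms of the differential ideal, combined with the counting identity of part (c) and Theorem \ref{thm:q-ident} to force equality and deduce parts (a) and (b). But there is a genuine gap at exactly the step you yourself flag as ``the technical heart'': part (c) is never proved. You name two candidate routes (a cluster bijection, or a system of $q$-difference equations) without executing either, and this is where essentially all the content of the theorem lives --- the conditions \eqref{eq:horrible-set} are intricate enough that neither route is routine. The paper's actual argument partitions $\cP(n)$ into five classes $\cA(n), \cB(n), \cC(n), \cD(n), \cE(n)$ according to the configuration of the \emph{smallest} parts (whether $\lambda_m>2$; $\lambda_m=2<\lambda_{m-1}-1$; $\lambda_{m-1}=3$; $\lambda_m=\lambda_{m-1}=2<\lambda_{m-2}-2$; or $\lambda_{m-2}=4$), derives recurrences such as $a(n,m)=a(n-m,m)+b(n-m,m)+c(n-m,m)+d(n-m,m)$ from the map subtracting $1$ from every part, translates these into a coupled system of $q$-difference equations (Lemma \ref{lem:functional}), and then verifies by direct manipulation that explicit closed forms involving $q$-binomial coefficients (Proposition \ref{prop:closed-formulas}) solve that system; summing the five resulting quasiparticle series yields \eqref{eq:pmn.gen.function}. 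Without some argument of this kind your proofs of parts (a) and (b) also collapse, since both rest on part (c).

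Two smaller points. First, your account of the leading-term step underestimates it: the families in \eqref{eq:horrible-set} of length $\geq 4$ are not obtained by ``applying $\partial$ repeatedly'' to a few seeds, because the leading monomials of $\partial^{(n)}a$ and $\partial^{(n)}b$ alone only realize the length-three patterns $[p,p,p]$, $[p+1,p,p]$, $[p+1,p+1,p]$; the remaining families require, for each shift $k$, bespoke combinations (the elements $u_k, v_k, w_k, y_k, z_k$ of the paper) with coefficients polynomial in $k$, involving multiplication by extra variables $L_{-j}$ and established by induction, plus four sporadic elements. The idea is right but the uniformity claim is not. Second, the side remark that the number of parts is ``exactly the degree in the canonical (Li) grading,'' so that $\chi_{\gr_F V}(t,q)=P(t,q)$, is incorrect: the number of parts records the PBW filtration degree, and by Arakawa's relation $F_pV_n=G^{n-p}V_n$ the Li-bigraded character is $P(t^{-2},tq)$, as in Corollary \ref{cor:basis}. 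This does not affect the theorem as stated, but it would matter wherever the Li grading is actually invoked.
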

\begin{nolabel}
The strategy to prove Theorems \ref{thm:q-ident}--\ref{thm:partition-theorem} is as follows.
Part \ref{thm:part-c} of Theorem \ref{thm:partition-theorem} is proved in Section \ref{sec:partition}. 
Its proof proceeds essentially by finding a system of $q$-difference equations satisfied by the generating function $P(t, q)$ and showing that the right hand side of \eqref{eq:pmn.gen.function} satisfies the same system. The generating function for the cardinality of $\cP(n)$ is obtained by specializing $P(t, q)$ to $t=1$, and part \ref{thm:part-b} of Theorem \ref{thm:partition-theorem} follows at once from part \ref{thm:part-c} together with Theorem \ref{thm:q-ident} and the final line of equation \eqref{eq:alternatives}.

Theorem \ref{thm:q-ident} is proved in Section \ref{sec:polynomial-ident}. We
find two families of polynomials depending on a parameter $n \in \mathbb{Z}_+$
and show that they are equal by proving that they satisfy the same recurrence
equation. Their limits $n \rightarrow \infty$ give \eqref{eq:q-ident}. 

In Section \ref{sec:PBW} we show that, for each partition $\lambda$ containing one of the partitions \eqref{eq:horrible-set}, the monomial $L_\lambda$ is the leading monomial of some element of $I = (a,b)_\partial$. Since the morphism 
\eqref{eq:Rbig.quotient} is a surjection, to establish 
Theorem \ref{thm:main} it is enough to prove that the specialization of $P(t, q)$ to $t=1$ coincides with $\chi_{\gr_F \Vir_{3,4}}(q)$. This follows from Theorem \ref{thm:partition-theorem} \ref{thm:part-c} and Theorem \ref{thm:q-ident}, since the graded character of $\Vir_{3,4}$ and of its associated graded $\gr_F \Vir_{3,4}$ coincide. Finally, part \ref{thm:part-a} of Theorem \ref{thm:partition-theorem} follows
from part \ref{thm:part-c} and Theorem \ref{thm:main}. 
\label{no:strategy}
\end{nolabel}

As a corollary of Theorems \ref{thm:q-ident}--\ref{thm:partition-theorem} we deduce the following result on the structure of $\gr_F \Vir_{3, 4}$. In particular the bigraded dimension $\chi_{\gr_F \Vir_{3, 4}}(t, q)$ is obtained naturally from \eqref{eq:q-ident}. We remark that, by contrast, there seems to be no natural way to introduce $t$ into any of the formulas \eqref{eq:alternatives} and \eqref{eq:e8} for $\chi_{\Vir_{3, 4}}(q)$ in such a way as to obtain $\chi_{\gr_F \Vir_{3, 4}}(t, q)$.

\begin{cor}
Let $V = \Vir_{3, 4}$, the bigraded character of the singular support of $V$ is given by 
\begin{equation}\label{eq:q-t-ident}
\chi_{\gr_F V}(t, q) = P(t^{-2}, tq), 
\end{equation}
where $P(t, q)$ is the generating function \eqref{eq:pmn.gen.function}.
\label{cor:basis}
\end{cor}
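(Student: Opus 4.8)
The plan is to read off both gradings of $\gr_F V$ directly from the bihomogeneous monomial basis produced by Theorems \ref{thm:main} and \ref{thm:partition-theorem}, so that the corollary reduces to matching up the two degrees. First I would invoke Theorem \ref{thm:main}: the surjection \eqref{eq:Rbig.quotient} is in fact an isomorphism $J\mathbb{C}[L_{-2}]/(a,b)_\partial \mapsisom \gr_F V$ of Poisson vertex algebras, and as such it respects both the conformal weight grading and the canonical grading induced by the Li filtration. Under this isomorphism the set $\{L_\lambda\}_{\lambda \in \cP(n)}$ maps to a basis of $(\gr_F V)_n$: these monomials span the degree-$n$ part of the quotient (by the argument of Section \ref{sec:PBW}, each monomial containing one of the forbidden partitions \eqref{eq:horrible-set} is the leading term of an element of $(a,b)_\partial$, hence is redundant), and they are linearly independent because $|\cP(n)| = \dim V_n = \dim (\gr_F V)_n$. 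Since each $L_\lambda$ is homogeneous for both gradings, it then remains only to record its two degrees.

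Next I would compute the canonical degree of each generator $L_{-k}$. The element $L_{-2}$ is the image of the conformal vector, hence lies in $\gr_F^0 V = R_V$ and has canonical degree $0$, while its conformal weight is $2$. The derivation $\partial$ is induced by the translation operator, which raises the canonical degree by exactly one; combined with $\partial(L_{-k}) = (k-1) L_{-k-1}$, an immediate induction shows that $L_{-k}$ has canonical degree $k-2$ and conformal weight $k$. Consequently, for a partition $\lambda = [\lambda_1,\dots,\lambda_m]$ the monomial $L_\lambda = L_{-\lambda_1}\cdots L_{-\lambda_m}$ has conformal weight $|\lambda| = \lambda_1+\cdots+\lambda_m$ and canonical degree $\sum_{i=1}^{m}(\lambda_i-2) = |\lambda| - 2m$.

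The corollary then follows by substitution. Writing $n = |\lambda|$ and letting $m(\lambda)$ denote the number of parts, the bihomogeneous basis yields
\begin{equation*}
\chi_{\gr_F V}(t,q) = \sum_{\lambda \in \bigcup_n \cP(n)} t^{\,|\lambda| - 2 m(\lambda)}\, q^{\,|\lambda|} = \sum_{m,n \in \Z_{\geq 0}} p(n,m)\, (t^{-2})^{m}\, (tq)^{n},
\end{equation*}
and the right-hand side is exactly $P(t^{-2}, tq)$ by the definition \eqref{eq:pmn.gen.function} of $P$. As a consistency check, specializing to $t=1$ returns $\chi_{\gr_F V}(q) = P(1,q) = \chi_V(q)$, in agreement with Theorem \ref{thm:partition-theorem}\ref{thm:part-c}.

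The only genuinely delicate point I anticipate is pinning down the canonical grading, namely the claim that $L_{-k}$ sits in canonical degree $k-2$ rather than $k$; the rest is bookkeeping. This rests on the normalization of the Li filtration (so that $R_V = \gr_F^0 V$ and the translation operator increases the canonical degree by one) and on the fact that \eqref{eq:Rbig.quotient} is a morphism of conformal, canonically graded Poisson vertex algebras, which guarantees that both gradings transfer faithfully across the isomorphism of Theorem \ref{thm:main}.
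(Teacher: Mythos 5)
Your proof is correct and follows essentially the same route as the paper: both read the two gradings off the bihomogeneous monomial basis $\{L_\lambda\}$ furnished by Theorems \ref{thm:main} and \ref{thm:partition-theorem}, so that the identity reduces to the substitution $\sum_{m,n} p(n,m)\,t^{n-2m}q^n = P(t^{-2},tq)$. The only cosmetic difference is that where you compute the Li degree $|\lambda|-2m$ of $L_\lambda$ directly by induction on $\partial$, the paper records the PBW bidegree and converts it via Arakawa's relation \eqref{eq:Arakawa.relation} --- the same fact in two guises.
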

\begin{nolabel}
Other monomial bases have been found for Virasoro minimal models. In \cite{ff1} monomial bases for $(2,p')$ Virasoro minimal models are constructed and used to give a new proof of the Andrews-Gordon identity. It was remarked there that describing the ideal in other cases should lead to interesting partition identities. In \cite{jimbo} the authors construct a monomial basis for all minimal models $\Vir_{p,p'}$ with $1 < p'/p < 2$. However the \emph{monomials} in their setting are rather different in that they refer to the action of intertwining operators and not the Virasoro modes themselves. In the case of the Ising model this corresponds to the classical Fermionic Fock space realization.
\label{no:jimbo}
\end{nolabel}

\begin{nolabel}
The normalized character $q^{-1/48} \chi_{\Vir_{3, 4}}(q)$ is a modular function. In \cite{nahm} Nahm conjectured (roughly speaking) that for any matrix $A$ for which the sum \eqref{eq:nahm} has appropriate asymptotic behaviour there exists some choice of $B$ and $C$ for which the sum is modular. (See Section \ref{no:nahmsum2} below.) The matrix $A = \left(\begin{smallmatrix}
8 & 3 \\ 3 & 2 \end{smallmatrix}\right)$ which figures in \eqref{eq:pmn.gen.function} above, has appeared in Terhoeven's list of matrices $A$ for which \eqref{eq:nahm} displays the correct asymptotics \cite{terhoeven}. But until now, despite computer searches \cite[\S 3.B.d)]{zagier}, no modular Nahm sum associated with the matrix had been found. Our example shows that modularity can be achieved by considering more general quasiparticle sums.
\label{no:nahmsum1}
\end{nolabel}

\begin{nolabel} The same techniques developed in this article can be used for other vertex algebras or even vertex algebra modules to yield new $q$-series and partition identities. For example, the Ising model $\Vir_{3,4}$ has three irreducible modules, $V_0 = V$, $V_{1/2}$ and $V_{1/16}$ where the subscript labels the degree of the unique singular vector. The (unnormalized) characters of $V_{1/2}$ and $V_{1/16}$ are given by 
\begin{equation}
\begin{aligned}
\chi_{V_{1/2}}(q) &=   
 \frac{1}{2} \left( \prod_{m=1}^\infty \left( 1+q^{m-1/2} \right) - \prod_{m=1}^\infty \left( 1-q^{m-1/2} \right) \right)   
=q^{1/2} \sum_{k \geq 1} \frac{q^{2 k^2 - 2k}}{(q)_{2k-1}}, \\ 
\chi_{V_{1/16}}(q) &= \prod_{m = 1}^\infty \left( 1+q^{m} \right) = \sum_{k \geq 0} \frac{q^{\frac{k(k+1)}{2}}}{(q)_{k}}.  
\end{aligned}
\end{equation}
In addition to Theorem \ref{thm:q-ident} pertaining to the character of $V_0$, we have
\begin{thm}  The (unnormalized) characters of $V_0$, $V_{1/2}$ and $V_{1/16}$ are given by 
\begin{equation}
\begin{aligned}
\chi_{V_{0}}(q) &= \sum_{k_{1}, k_{2} \geq 0} \frac{q^{4k_1^2 +3 k_1 k_2 + k_2^2 }}{(q)_{k_1}(q)_{k_2}} \left( 1 - q^{4k_1 + 2k_2 + 1} \right), \\ 
\chi_{V_{1/2}}(q) &= q^{1/2} \sum_{k_{1}, k_{2} \geq 0} \frac{q^{4k_1^2 +3 k_1 k_2 + k_2^2 + 2k_1}}{(q)_{k_1}(q)_{k_2}} \left( 1 - q^{8k_1 + 4k_2 + 6} \right), \\ 
\chi_{V_{1/16}}(q) &=\sum_{k_{1}, k_{2} \geq 0} \frac{q^{4k_1^2 +3 k_1 k_2 + k_2^2}}{(q)_{k_1}(q)_{k_2}} \left( q^{k_1+k_2} + q^{4k_1 + k_2 + 1} \right). 
\end{aligned}
\end{equation}
\label{thm:modules}
\end{thm}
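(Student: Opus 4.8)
The plan is to establish the three character identities of Theorem \ref{thm:modules} by the same polynomial-recurrence strategy already outlined for Theorem \ref{thm:q-ident}, leveraging the fact that the three quasiparticle sums on the right-hand side differ from the one in \eqref{eq:q-ident} only in the linear correction factor enclosed in parentheses. First I would observe that the three stated sums can be organized around the common Nahm data $A = \left(\begin{smallmatrix} 8 & 3 \\ 3 & 2 \end{smallmatrix}\right)$, so that each is of the form $\sum_{k_1,k_2 \geq 0} \frac{q^{4k_1^2+3k_1k_2+k_2^2}}{(q)_{k_1}(q)_{k_2}} \, F_i(k_1,k_2)$ where $F_0 = 1 - q^{4k_1+2k_2+1}$, $F_{1/2}$ carries an overall shift and the factor $1 - q^{8k_1+4k_2+6}$, and $F_{1/16} = q^{k_1+k_2} + q^{4k_1+k_2+1}$. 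Note that $F_0$ already matches the parenthetical factor of \eqref{eq:q-ident} after writing $1 - q^{k_1} + q^{k_1+k_2}$ and reindexing; I would first verify this algebraic equivalence so that the $V_0$ identity is literally Theorem \ref{thm:q-ident} rewritten, thereby reducing the work to the two module characters $V_{1/2}$ and $V_{1/16}$.

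For the remaining two, I would introduce finite truncations: for each target sum define a family of polynomials $D_i^{(n)}(q)$ obtained by capping the summation indices (or by replacing the infinite products with Gaussian binomial analogues on the product side), mirroring the two-family construction promised in Section \ref{sec:polynomial-ident}. On the product side I would use the known closed forms already recorded in the excerpt — namely $\chi_{V_{1/2}}(q) = q^{1/2}\sum_{k\geq 1} \frac{q^{2k^2-2k}}{(q)_{2k-1}}$ and $\chi_{V_{1/16}}(q) = \sum_{k\geq 0} \frac{q^{k(k+1)/2}}{(q)_k}$ — as the anchor, and I would derive $q$-difference (recurrence) relations in $n$ for both the truncated product-side polynomials and the truncated double-sum polynomials. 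The heart of the argument is then to check that both families satisfy the same recurrence with matching initial conditions, so that they agree for all $n$ and hence in the limit $n \to \infty$. Because all three $F_i$ share the same quadratic exponent and the same $(q)_{k_1}(q)_{k_2}$ denominators, the contiguous relations governing the double sum should be uniform across the three cases, differing only through how the correction factor $F_i$ transforms under the index shifts $k_1 \mapsto k_1 - 1$ and $k_2 \mapsto k_2 - 1$; I expect to be able to recycle most of the recurrence bookkeeping from the proof of Theorem \ref{thm:q-ident}.

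The main obstacle will be locating and verifying the correct finite-level recurrences for the two module characters, since unlike the vacuum case these carry fractional prefactors ($q^{1/2}$) and single-sum product expressions that are not immediately of Rogers–Ramanujan–Slater shape with a clean Bailey-pair origin. Concretely, the factor $1 - q^{8k_1+4k_2+6}$ for $V_{1/2}$ produces a telescoping across a wider shift than the $V_0$ factor, so matching it to the single-sum $\sum_{k\geq1} q^{2k^2-2k}/(q)_{2k-1}$ will require identifying the right reindexing that collapses the two-parameter sum onto a one-parameter sum; this is where I anticipate the bulk of the technical effort. A secondary subtlety is ensuring the $V_{1/16}$ factor $q^{k_1+k_2} + q^{4k_1+k_2+1}$, which is a genuine sum of two monomials rather than a difference, combines correctly under the recurrence to reproduce $\prod_{m\geq1}(1+q^m)$; I would handle this by splitting into the two monomial contributions, running the recurrence argument on each, and recombining, checking at the end that the initial data at small $n$ match the Euler product expansion. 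Once the recurrences and initial conditions are confirmed in all three cases, the theorem follows by induction on $n$ and passage to the limit, exactly as in the proof of Theorem \ref{thm:q-ident}.
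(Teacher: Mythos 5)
Your proposal follows essentially the same route as the paper: the paper proves Theorem \ref{thm:modules} by introducing $q$-binomial truncations $S^{1/2}_n, S^{1/16}_n$ of the product-side single sums and $T^{1/2}_n, T^{1/16}_n$ of the quasiparticle double sums (Proposition \ref{prop:modules-pol}), verifying via the \texttt{qMultiSum} package that each pair satisfies a common recurrence with matching initial values, and passing to the limit $n \to \infty$, exactly as in the proof of Theorem \ref{thm:q-ident}; the observation that the $V_0$ case is a rewriting of Theorem \ref{thm:q-ident} also matches the paper. Your plan is correct in substance and structure.
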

The first equation is easily seen to be equivalent to Theorem \ref{thm:q-ident}.
The proof of this Theorem is found in Section \ref{sec:polynomial-ident}.
\end{nolabel}

\begin{nolabel}[Acknowledgments] RH would like to thank S. Kanade, A.Milas and P. Santos for
helpful discussions around equation \eqref{eq:q-ident}. The authors would like
to thank P. Paule and A. Riese for access to the
\texttt{Mathematica} package \texttt{qMultiSum}. This project was funded by CNPq grants 409582/2016-0 and 303806/2017-6. 
\end{nolabel}

\section{Preliminaries and notation}\label{sec:Preliminaries}

\begin{nolabel}\label{no:virasoro}
In working with $q$-series identities the following notation is useful. The $q$-Poch\-hammer symbol is $(q)_n = \prod_{j = 1}^n (1-q^j)$. We also write $(q)_\infty$ for $\prod_{j = 1}^\infty (1-q^j)$. The $q$-binomial coefficient is defined to be
\begin{equation}
\binom{m}{n}_q = \frac{(q)_m}{(q)_{n} (q)_{m-n}},
\label{eq:binom}
\end{equation}
for $0 \leq n \leq m$ and $0$ otherwise.
\end{nolabel}

\begin{nolabel} We denote by $\cL$ the Virasoro Lie algebra, namely the vector space with basis
$\left\{ L_n\right\}_{n \in \Z} \cup \{C\}$ and Lie brackets given by 
\begin{equation}
\label{eq:virbrackets} [L_m, L_n] = (m-n) L_{m+n} + \frac{m^3-m}{12}\delta_{m,-n} C, \qquad [C, \cL] = 0. \end{equation}
We also write $\cL_+$ for the subalgebra of $\cL$ spanned by $\left\{ L_n \right\}_{n \geq -1} \cup \{C\}$. The one dimensional representation $\mathbb{C}_c$ of $\cL_+$ of central charge $c$ is defined by $C \mapsto c$ and $L_n \mapsto 0$. The induced $\cL$-module
\[
\Vir^{c} = U(\cL) \otimes_{U(\cL_+)} \mathbb{C}_{c},
\]
carries the structure of a (conformal) vertex algebra \cite{frenkel-zhu}. By the PBW theorem the vectors
\begin{equation} 
 L_{-n_1} L_{-n_2}\dots L_{-n_m} \vac, \qquad n_1 \geq n_2 \geq \dots \geq n_m \geq 2. 
\label{eq:pbw-basis}
\end{equation}
constitute a basis of $\Vir^c$. We introduce a $\Z_{\geq 0}$-grading on $\Vir^c$ by assigning the monomial \eqref{eq:pbw-basis} degree $\sum_{i=1}^m n_i$. The monomials of degree $n$ are parametrized by partitions of $n$ into parts of size greater than or equal to $2$.

For the special values of the central charge $c = c_{p, p'}$ where $p, p' \geq 2$ are coprime integers and
\[
c = c_{p,p'} = 1 - \frac{6(p-p')^2}{pp'},
\]
the vertex algebra $\Vir^c$ is not simple, with maximal ideal (equivalently maximal $\cL$-submodule) generated by the homogeneous \emph{singular vector} $v_{p, p'}$ of degree $(p-1)(p'-1)$. The simple quotient, denoted $\Vir_{p, p'}$, is a rational vertex algebra known as the $(p, p')$ \emph{Virasoro minimal model}. The case $(p, p') = (3, 4)$, of central charge $c_{3, 4} = \tfrac{1}{2}$, is known as the \emph{Ising model}. The singular vector $v_{3, 4} \in \Vir^{1/2}$ is given explicitly by
\begin{equation} v_{3,4} = L_{-2}^3 \vac + \frac{93}{64}L_{-3}^2 - \frac{27}{16}L_{-6} \vac - \frac{33}{8}L_{-4}L_{-2}\vac.
\label{eq:singular}
\end{equation}
The graded dimension, or \emph{character}, of $\Vir_{p, p'}$ is \cite{feigin-fuchs}
\begin{equation}\label{eq.minmod.character}
\chi_{\Vir_{p,p'}}(q) = \frac{1}{(q)_\infty} \sum_{m \in \Z} \left( q^{\frac{\left(2p p'm + p- p'\right)^2 - \left(p -p'\right)^2}{4p p'}} - q^{\frac{\left(2pp' m + p+p'\right)^2 - \left(p- p'\right)^2}{4p p'}} \right).
\end{equation}
\end{nolabel}

\begin{nolabel}
Let $V$ be a vertex algebra. We now recall the definition of the Li filtration on $V$ \cite{li05}. It is the decreasing filtration $\{ F_p V \}$, defined by letting $F_p V$ be the linear span of vectors of the form
\[
a^1_{(-n_1-1)} a^2_{(-n_2-1)}\dots a^k_{(-n_k-1)}\vac, \quad a^i \in V, \quad n_i \geq 0, \quad \sum_{i=1}^k n_i \geq p.
\]
The associated graded $\gr_F V$ is known as the singular support of $V$ and is a $\Z_{\geq 0}$-graded Poisson vertex algebra. As remarked in \ref{no:classically-free} the degree $0$ component $\gr_F^0 V$ coincides with Zhu's Poisson algebra $R_V = V/V_{(-2)}V$.

If $V$ is conformal then a choice of homogeneous strong generators $\{u^i\}$ permits the introduction of an increasing filtration $\{G^p V\}$, also introduced by Li \cite{li05}, defined by letting $G^p V$ be the linear span of vectors of the form
\[
u^{i_1}_{(-n_1-1)} \ldots u^{i_N}_{(-n_N-1)}\vac, \quad n_i \geq 0, \quad \sum_{j=1}^N d_{i_j} \leq p.
\]
(Here $d_i$ denotes the degree, or conformal weight, of $u^i$.) 
Both filtrations are compatible with conformal weight and in fact it was proved by Arakawa that \cite{arakawa}
\begin{align}\label{eq:Arakawa.relation}
F_p V_n = G^{n-p} V_n.
\end{align}
For $\Vir^{c}$, which has $\omega = L_{-2}\vac$ as strong generating set, $G^p V$ is essentially the PBW filtration. Hence $F_p \Vir^{c}_n$ is the linear span of vectors of the form \eqref{eq:pbw-basis} satisfying $2m \leq n - p$. It is clear that $R_{\Vir^{c}} \simeq \mathbb{C}[\omega]$ and we have by the PBW theorem 
\[
\gr_F \Vir^{c} \simeq J R_{\Vir^{c}} \simeq \mathbb{C}[L_{-2},L_{-3},\dots],
\]
where $L_{-2} = \omega$ and the derivation $\partial$ is given by $\partial(L_{-n}) = (n-1) L_{-n-1}$.

As explained in the introduction the Li filtration on a vertex algebra $V$
entails a refinement of the character $\chi_V(q)$ to the two-variable character
$\chi_{\gr_F V}(t, q)$ defined in \eqref{eq:char.gr.two}. In this article we
work primarily with the PBW filtration on vertex algebras rather than the Li
filtration, but due to \eqref{eq:Arakawa.relation} it is easy to convert
generating functions from one to the other. As an application we have
\begin{proof}[Proof of Corollary \ref{cor:basis}]
Due to Theorem \ref{thm:partition-theorem} \ref{thm:part-a} and \ref{thm:part-c}
we know that the bigraded character $\chi_{\gr_F \Vir_{3,4}}(t,q)$ with respect
to the PBW filtration is given by $P(t,q)$ as in \eqref{eq:pmn.gen.function}.
Then \eqref{eq:Arakawa.relation} translates this into \eqref{eq:q-t-ident}.
\end{proof}

The filtrations on $\Vir_{p,p'}$ coincide with those induced by the quotient map from $\Vir^{c}$, where $c = c_{p,p'}$. In particular the quotient induces a surjection $R_{\Vir^{c}} \twoheadrightarrow R_{\Vir_{p,p'}}$. Let $s=(p-1)(p'-1)/2$. It is known that the coefficient of $L_{-2}^s\vac$ in the singular vector $v_{p,p'}$ is nontrivial \cite{feigin-fuchs}. All other monomials of the same degree lie in $F_1 \Vir_{p,p'}$ and hence $L_{-2}^s$ vanishes in $R_{\Vir_{p,p'}}$. Indeed, abusing notation, we have $R_{\Vir_{p,p'}} \simeq \mathbb{C}[L_{-2}]/(L_{-2}^s)$ and consequently
\[
JR_{\Vir_{p,p'}} \simeq \mathbb{C}[L_{-2},L_{-3},\dots]/(L_{-2}^s)_{\partial}.
\]
As in \eqref{eq:pi} there is a canonical surjection
\[
\pi : \mathbb{C}[L_{-2},L_{-3},\dots]/(L_{-2}^s)_{\partial} \twoheadrightarrow \gr_F \Vir_{p,p'}
\]
The graded dimension of the arc algebra has been computed in \cite{mourtada}. There it is shown that the ideal $(L_{-2}^s)_{\partial}$ has a Gr\"{o}bner basis whose leading terms are, translated to the present context, the monomials
\[
L_{-\lambda_1} \cdots L_{-\lambda_m} \in \mathbb{C}[L_{-2},L_{-3},\dots],
\]
for which the partition $\lambda = [\lambda_1, \ldots, \lambda_m]$ satisfies $\lambda_m \geq 2$ and the difference condition
\[
\lambda_i - \lambda_{i+s-1} \geq 2, \qquad \text{for $1 \leq i \leq m+1-s$}.
\]
Such partitions are counted by the left hand side of the following
Andrews-Gordon identity \cite{andrews-general}
\begin{equation}\label{eq:AG.identity}
\sum_{k = (k_1, \ldots, k_{s-1}) \in \Z_{\geq 0}^{s-1}} \frac{q^{\frac{1}{2} {k}^T G^{(s)} {k} + k^T B^{(s)}}}{(q)_{k_1} \cdots (q)_{k_{s-1}}} = \prod_{\substack{n \geq 1, \\ n\not\equiv 0, \pm 1 \bmod{2s+1}}} \frac{1}{1-q^n}.
\end{equation}
Here $G^{(s)}$ is the matrix with entries $G^{(s)}_{i, j} = 2\min\{i, j\}$ for $1 \leq i, j \leq s-1$ and $B^{(s)} = (1, 2, \ldots, s-1)$. For $p=2$ the Jacobi triple product identity reduces \eqref{eq.minmod.character} to the right hand side of \eqref{eq:AG.identity}. The $(2, 2s+1)$ Virasoro minimal models are thus classically free.
\label{no:arcising}
\end{nolabel}

\begin{nolabel} For the case of the Ising model $(p, p') = (3, 4)$ we have the following result.
\begin{lem*}
The kernel of the surjection
\[
\pi : \mathbb{C}[L_{-2},L_{-3},\dots]/(L_{-2}^3)_{\partial} \twoheadrightarrow \gr_F \Vir_{3,4}
\]
is a nontrivial graded differential ideal. Its lowest graded piece is the linear span of the degree $9$ vector
\[
b = \frac{1}{6} L_{-5}L_{-2}^2 + L_{-4}L_{-3}L_{-2}.
\]
\end{lem*}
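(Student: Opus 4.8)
The plan is to pin down $\ker\pi$ by a Hilbert series comparison in low degrees, and then to exhibit $b$ explicitly as the symbol of a null vector.

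Since $\pi$ is a surjective morphism of $\Z_{\geq 0}$-graded differential (in fact Poisson vertex) algebras, its kernel is automatically a graded differential ideal, and for each $n$ one has $\dim(\ker\pi)_n = \dim(JR_{\Vir_{3,4}})_n - \dim(\Vir_{3,4})_n$, using $\dim(\gr_F \Vir_{3,4})_n = \dim(\Vir_{3,4})_n$. So the first step is to compute both sides through degree $9$. By Mourtada's description recalled in \ref{no:arcising}, $(JR_{\Vir_{3,4}})_n = (\mathbb{C}[L_{-2},L_{-3},\dots]/(L_{-2}^3)_\partial)_n$ has a monomial basis indexed by partitions of $n$ into parts $\geq 2$ with $\lambda_i - \lambda_{i+2} \geq 2$; enumerating these gives dimensions $1,0,1,1,2,2,3,3,5,6$ for $n = 0,\dots,9$. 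On the other side $\chi_{\Vir_{3,4}}(q) = 1 + q^2 + q^3 + 2q^4 + 2q^5 + 3q^6 + 3q^7 + 5q^8 + 5q^9 + \cdots$ from \eqref{eq:alternatives}. The two sequences agree through degree $8$ and first differ at degree $9$, where the left side exceeds the right by one. Hence $\ker\pi$ vanishes in degrees $\leq 8$ and is one-dimensional in degree $9$; in particular it is nontrivial, and its lowest graded piece is a line in degree $9$.

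It remains to show this line is $\mathbb{C}b$. That $b$ is nonzero in $JR_{\Vir_{3,4}}$ is immediate: $b = \tfrac16 L_{-5}L_{-2}^2 + L_{-4}L_{-3}L_{-2}$ involves only the monomials attached to the partitions $[5,2,2]$ and $[4,3,2]$, both of which are standard, so they are linearly independent modulo $(L_{-2}^3)_\partial$. To see that the class of $b$ lies in $\ker\pi$, I would realize $b$ as a symbol in the maximal submodule. Let $v = v_{3,4}$ be the singular vector \eqref{eq:singular}, with image $a = L_{-2}^3$ in $R_{\Vir^{1/2}}$, and let $M = U(\cL) v \subseteq \Vir^{1/2}$ be the maximal submodule. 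Passing to associated graded in $0 \to M \to \Vir^{1/2} \to \Vir_{3,4} \to 0$ identifies $\gr_F M$ with $\ker(\mathbb{C}[L_{-2},L_{-3},\dots] \twoheadrightarrow \gr_F \Vir_{3,4})$, whence $\ker\pi = \gr_F M/(L_{-2}^3)_\partial$. The weight-$9$ part $M_9$ is spanned by $L_{-3}v$, $L_{-1}L_{-2}v$ and $L_{-1}^3 v$. The first two have leading symbols supported on the four-factor monomial $L_{-3}L_{-2}^3 = L_{-3}\cdot a$, and the third has leading symbol $\partial^3 a$; all of these already lie in $(L_{-2}^3)_\partial$. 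A class surviving in $\ker\pi$ must therefore arise from the combination in which the four-factor leading terms cancel, so I would take
\[
w = L_{-3}v - \tfrac14 L_{-1}L_{-2}v \in M,
\]
extract its (now three-factor) leading symbol, and reduce it modulo $(L_{-2}^3)_\partial$. The three-factor weight-$9$ part of $(L_{-2}^3)_\partial$ is spanned by $\partial^3 a = 6L_{-3}^3 + 36 L_{-4}L_{-3}L_{-2} + 18 L_{-5}L_{-2}^2$, and reducing by it eliminates the non-standard monomial $L_{-3}^3$ (the excluded partition $[3,3,3]$).

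The crux, and the main obstacle, is this symbol computation. The delicate point is that $L_{-1}$ acts as the translation operator $T$, which agrees with the formal derivation $\partial$ only on top-factor symbols: applying $T$ to a PBW monomial also produces strictly lower-factor terms via the Virasoro reorderings. Once the four-factor parts cancel in $w$, exactly these reordering corrections feed into the surviving three-factor symbol and must be tracked; a computation that simply replaces $L_{-1}$ by $\partial$ throughout yields incorrect three-factor coefficients. Keeping them — for instance $T(L_{-2}^4\vac) = 4 L_{-3}L_{-2}^3\vac + 6 L_{-5}L_{-2}^2\vac + \cdots$, whose term $6L_{-5}L_{-2}^2$ is essential — one finds that the reduced three-factor symbol of $w$ is a nonzero multiple of $L_{-4}L_{-3}L_{-2} + \tfrac16 L_{-5}L_{-2}^2 = b$. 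As $(\ker\pi)_9$ is one-dimensional and $b \neq 0$ there, $b$ spans it, which is the assertion of the lemma.
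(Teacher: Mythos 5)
Your argument is correct and follows essentially the same route as the paper: a graded-dimension comparison (via Mourtada's basis and the known character) locates $\ker\pi$ as a line in degree $9$, and $b$ is then exhibited as the reduced leading symbol of an explicit weight-$9$ element of the maximal submodule $U(\cL)v_{3,4}$ --- indeed your combination $L_{-3}v - \tfrac14 L_{-1}L_{-2}v$ agrees, up to a scalar and a multiple of $L_{-1}^{3}v$ (which only contributes $\partial^3 a \in (L_{-2}^3)_\partial$), with the combination $\tfrac{256}{429}L_{-3}v_{3,4} - \tfrac{64}{429}L_{-1}L_{-2}v_{3,4} - \tfrac{31}{286}L_{-1}^{3}v_{3,4}$ whose membership in $F_5$ the paper verifies directly. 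You also correctly isolate the one delicate point (the reordering corrections such as the $6L_{-5}L_{-2}^2$ term in $T(L_{-2}^4\vac)$), which is exactly what the paper's ``direct computation'' tracks.
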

\begin{proof}
It is easy to check that the lowest graded piece of $\ker\pi$ has degree $9$ and is $1$-dimensional by comparing the graded dimensions of $\Vir_{3, 4}$ and the arc algebra. Now let 
\[
w_{3,4} = L_{-5}L_{-2}L_{-2} \vac + 6 L_{-4}L_{-3}L_{-2} \vac \in F_{3} \Vir^{1/2}.
\]
Using \eqref{eq:virbrackets} and \eqref{eq:singular} we prove by direct computation
\begin{multline*}
w_{3,4} + \frac{256}{429} L_{-3}v_{3,4} - \frac{64}{429} L_{-1} L_{-2}v_{3,4} - \frac{31}{286} L_{-1}^{3}v_{3,4} = \\
\frac{27}{8} L_{-6}L_{-3}\vac +\frac{87}{4}L_{-7}L_{-2}\vac + \frac{147}{32}L_{-9}\vac - \frac{45}{16}L_{-5}L_{-4}\vac \in F_{5}\Vir^{1/2}. 
\end{multline*}
The Lemma follows applying $\pi$ to both sides of this equation and noting that the image of the LHS equals $\pi\bigl(w_{3,4}\bigr)$. 
\end{proof}
In fact by the same technique we may prove in general:
\begin{lem*}
Let $V = \Vir_{3, p'}$ where $p' \geq 4$. The kernel of the surjection
\[
\pi : \mathbb{C}[L_{-2},L_{-3},\dots]/(L_{-2}^{p'-1})_{\partial} \twoheadrightarrow \gr_F \Vir_{3,p'}
\]
is a nontrivial graded differential ideal. Its lowest graded piece is the linear span of the degree $2p'+1$ vector
\[
b^{(p')} = \frac{(9-2p')}{3(p'-2)} L_{-5}L_{-2}^{p'-2} + L_{-4}L_{-3}L_{-2}^{p'-3}.
\]
\end{lem*}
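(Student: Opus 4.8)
The plan is to follow the $(3,4)$ argument essentially verbatim. Write $c = c_{3,p'}$, let $v = v_{3,p'} \in \Vir^{c}$ be the singular vector of weight $2(p'-1)$, normalized so that its leading symbol is $\sigma(v) = L_{-2}^{p'-1}$, and let $M \subset \Vir^{c}$ be the maximal submodule it generates. Since $M$ is stable under $T = L_{-1}$ and is an ideal, its image $\gr_F M \subset \gr_F \Vir^{c} = \mathbb{C}[L_{-2},L_{-3},\dots]$ is a $\partial$-stable ideal containing $\sigma(v) = L_{-2}^{p'-1}$, hence contains $(L_{-2}^{p'-1})_\partial$, and $\ker \pi \simeq \gr_F M / (L_{-2}^{p'-1})_\partial$. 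The task is to locate the lowest weight at which $\gr_F M$ strictly exceeds $(L_{-2}^{p'-1})_\partial$ and to identify the excess. First I would check that $\ker\pi$ vanishes below weight $2p'+1$: as $v$ is annihilated by $L_n$ for $n \geq 1$, the submodule $M$ is spanned in weights $\leq 2p'$ by $v$, $L_{-1}v$, $L_{-1}^2 v$ and $L_{-2}v$, whose leading symbols $L_{-2}^{p'-1}$, $\partial(L_{-2}^{p'-1})$, $\partial^2(L_{-2}^{p'-1})$ and $L_{-2}^{p'}$ all lie in $(L_{-2}^{p'-1})_\partial$; so the two ideals agree there.

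At weight $2p'+1$ the space $M_{2p'+1}$ is spanned by $L_{-3}v$, $L_{-1}L_{-2}v$ and $L_{-1}^3 v$, with leading symbols $L_{-3}L_{-2}^{p'-1}$ and $p' L_{-3}L_{-2}^{p'-1}$ in $\gr_F^{1}$, and $\partial^3(L_{-2}^{p'-1})$ in $\gr_F^{3}$. The first two are proportional, so I would form $u = L_{-1}L_{-2}v - p'\, L_{-3}v$, whose $\gr_F^{1}$-symbol vanishes; as the weight $2p'+1$ is odd there is no $\gr_F^{2}$-component, so $u \in F_3$ and $\sigma(u) \in \gr_F^{3}$ is the candidate new symbol. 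Evaluating it via the Virasoro brackets, the leading part $L_{-2}^{p'-1}$ of $v$ contributes $\binom{p'}{2} L_{-5}L_{-2}^{p'-2}$ (using $[L_{-2},L_{-3}] = L_{-5}$), while the subleading part $c_1 L_{-4}L_{-2}^{p'-3} + c_2 L_{-3}^2 L_{-2}^{p'-4}$ of $v$ supplies the rest, giving
\[
\sigma(u) = \Bigl(\tbinom{p'}{2} + 3c_1\Bigr) L_{-5}L_{-2}^{p'-2} + (4c_2 - 2c_1) L_{-4}L_{-3}L_{-2}^{p'-3} - 3c_2\, L_{-3}^{3}L_{-2}^{p'-4}.
\]
Reducing modulo $\partial^3(L_{-2}^{p'-1})$, the unique weight-$(2p'+1)$ element of $(L_{-2}^{p'-1})_\partial$ lying in $\gr_F^{3}$, removes the $L_{-3}^3 L_{-2}^{p'-4}$ term, and I expect the outcome to be a nonzero multiple of $b^{(p')}$.

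To conclude, the leading symbols of $M_{2p'+1}$ span a space of dimension at most $3$ which contains the $2$-dimensional subspace spanned by $L_{-3}L_{-2}^{p'-1}$ and $\partial^3(L_{-2}^{p'-1})$, so $(\ker\pi)_{2p'+1}$ is at most one-dimensional. Since the leading monomial $[5,2^{p'-2}]$ of $b^{(p')}$ satisfies the difference condition $\lambda_1 - \lambda_{p'-1} = 3 \geq 2$, it is a standard monomial for $(L_{-2}^{p'-1})_\partial$ and hence $b^{(p')}$ is nonzero in $JR_V$; combined with $\pi(b^{(p')}) = 0$ this forces $(\ker\pi)_{2p'+1} = \mathbb{C}\, b^{(p')}$. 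Nontriviality, gradedness and $\partial$-stability of $\ker\pi$ then follow at once from the fact that $\pi$ is a morphism of graded differential algebras.

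The hard part will be the explicit determination, uniformly in $p'$, of the subleading coefficients $c_1$ and $c_2$ of $v_{3,p'}$ — these should be pinned down by projecting the highest-weight relations $L_1 v = 0$ and $L_2 v = 0$ onto the weight $2p'-3$ and $2p'-4$ monomials that involve only $L_{-2}^{p'-1}$, $L_{-4}L_{-2}^{p'-3}$ and $L_{-3}^2 L_{-2}^{p'-4}$ — followed by the verification that the displayed $\sigma(u)$ is, modulo $\partial^3(L_{-2}^{p'-1})$, a nonzero multiple of $b^{(p')}$, and in particular that the ratio of the $L_{-5}L_{-2}^{p'-2}$ and $L_{-4}L_{-3}L_{-2}^{p'-3}$ coefficients comes out to $\tfrac{9-2p'}{3(p'-2)}$. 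As a consistency check, for $p'=4$ one has $c_1 = -\tfrac{33}{8}$ and $c_2 = \tfrac{93}{64}$, whence $\sigma(u) = -\tfrac{51}{8} L_{-5}L_{-2}^2 + \tfrac{225}{16} L_{-4}L_{-3}L_{-2} - \tfrac{279}{64} L_{-3}^3$, which modulo $\partial^3(L_{-2}^3) = 18 L_{-5}L_{-2}^2 + 36 L_{-4}L_{-3}L_{-2} + 6 L_{-3}^3$ is a nonzero multiple of $b^{(4)} = \tfrac16 L_{-5}L_{-2}^2 + L_{-4}L_{-3}L_{-2}$.
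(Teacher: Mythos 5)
Your approach is the same as the paper's: for $p'=4$ the paper exhibits $w_{3,4} + \tfrac{256}{429}L_{-3}v_{3,4} - \tfrac{64}{429}L_{-1}L_{-2}v_{3,4} - \tfrac{31}{286}L_{-1}^{3}v_{3,4} \in F_5 \Vir^{1/2}$, which is exactly your element $u$ (up to scale) reduced modulo $\partial^3(L_{-2}^3)$, and for general $p'$ the paper simply asserts that the same technique applies. Your setup and your $p'=4$ consistency check are correct, and the one computation you defer does close: projecting $L_2 v = 0$ onto $L_{-2}^{p'-2}\vac$ gives $8\binom{p'-1}{2} + \tfrac{(p'-1)c}{2} + 6c_1 = 0$, hence $c_1 = -\tfrac{(p'-1)(p'-2)(2p'+3)}{4p'}$, projecting $L_1 v = 0$ onto $L_{-3}L_{-2}^{p'-3}\vac$ gives $3\binom{p'-1}{2} + 5c_1 + 8c_2 = 0$, hence $c_2 = \tfrac{(p'-1)(p'-2)(4p'+15)}{32p'}$, and substituting these into your $\sigma(u)$ and clearing the $L_{-3}^3L_{-2}^{p'-4}$ term against $\partial^3(L_{-2}^{p'-1})$ yields $\tfrac{3(p'-1)(p'-2)(8p'^2+6p'-9)}{16p'(p'-3)}\, b^{(p')}$, a nonzero multiple as required.
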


\begin{cor} Let $V = \Vir_{3,p'}$. There is a surjective morphism
\begin{equation*}
JR_V \bigl/ I \similarrightarrow \gr_F V,
\end{equation*}
where $I$ denotes the differential ideal $(b^{(p')})_\partial  \subset JR_V$.
\label{cor:main}
\end{cor}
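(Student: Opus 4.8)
The plan is to obtain the asserted morphism by simply factoring the canonical surjection $\pi$ through the quotient by $I$, taking the preceding Lemma as the sole substantive input; the corollary is essentially formal once that Lemma is in hand.

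First I would recall that, by the general construction recorded in \eqref{eq:pi} (due to Arakawa), there is for every vertex algebra a canonical surjection of Poisson vertex algebras $JR_V \twoheadrightarrow \gr_F V$. For $V = \Vir_{3,p'}$ this is precisely the map
\[
\pi : \mathbb{C}[L_{-2},L_{-3},\dots]/(L_{-2}^{p'-1})_\partial \twoheadrightarrow \gr_F \Vir_{3,p'}
\]
appearing in the preceding Lemma, where $JR_V = \mathbb{C}[L_{-2},L_{-3},\dots]/(L_{-2}^{p'-1})_\partial$ as computed in \ref{no:arcising}. Being a morphism of Poisson vertex algebras, $\pi$ intertwines the derivations $\partial$ on source and target; consequently $\ker\pi$ is stable under $\partial$ and under multiplication by arbitrary elements, i.e.\ it is a graded differential ideal of $JR_V$.

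Next I would invoke the preceding Lemma, which identifies the span of $b^{(p')}$ as the lowest nonzero graded piece of $\ker\pi$; in particular $b^{(p')} \in \ker\pi$. Since $\ker\pi$ is a differential ideal and $I = (b^{(p')})_\partial$ is by definition the \emph{smallest} differential ideal containing $b^{(p')}$, we obtain the containment $I \subseteq \ker\pi$. This is exactly the condition needed for $\pi$ to descend: the composite $JR_V \xrightarrow{\pi} \gr_F V$ annihilates $I$ and therefore factors uniquely through the quotient map $JR_V \twoheadrightarrow JR_V/I$, yielding a well-defined morphism $JR_V/I \to \gr_F V$. Surjectivity is inherited from that of $\pi$. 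One may further note that, because $R_V = \mathbb{C}[L_{-2}]/(L_{-2}^{p'-1})$ is generated by the single element $L_{-2}$ whose Poisson self-bracket vanishes by antisymmetry, the $\lambda$-bracket on $JR_V$ is identically zero; hence $I$ is automatically a Poisson vertex ideal and the induced map is a morphism of Poisson vertex algebras, not merely of differential algebras.

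I expect no genuine obstacle in the corollary itself: all of its content is already packaged in the preceding Lemma, whose proof performs the explicit singular-vector computation (via \eqref{eq:virbrackets} and the analogue of \eqref{eq:singular} for $\Vir_{3,p'}$) establishing $b^{(p')} \in \ker\pi$. The only point requiring care is the passage from ``$b^{(p')}\in\ker\pi$'' to ``$I\subseteq\ker\pi$'', which is the formal observation that $\ker\pi$, as the kernel of a morphism of differential algebras, is closed under $\partial$ and hence contains the entire differential ideal generated by $b^{(p')}$. Upgrading this surjection to an \emph{isomorphism} is a strictly harder matter, reserved for Theorem \ref{thm:main} in the case $p'=4$ and, as noted after that theorem, known to fail for $p'\geq 5$.
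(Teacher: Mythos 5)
Your argument is correct and coincides with the paper's (implicit) reasoning: the corollary is stated there without proof as an immediate consequence of the preceding Lemma, precisely via the factorization you spell out, namely that $b^{(p')}\in\ker\pi$ together with the fact that $\ker\pi$ is a differential ideal gives $I=(b^{(p')})_\partial\subseteq\ker\pi$, so the canonical surjection $\pi$ descends to $JR_V/I$. Your side remark that the $\lambda$-bracket on $JR_V$ vanishes (so $I$ is automatically a Poisson vertex ideal) is a harmless and correct addition.
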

As remarked in the introduction, the surjection of Corollary \ref{cor:main} is
not an isomorphism for $p' \geq 5$. For the case of the Ising model $V =
\Vir_{3, 4}$ we study the structure of the ideal $I$ in the following Sections.
\label{no:vanishesw34}
\end{nolabel}

\begin{nolabel}[Asymptotics of Nahm sums] \label{no:nahmsum2}
Let $A$ be an $n \times n$ positive definite symmetric matrix with rational coefficients, ${B} = (b_1, \ldots, b_n) \in \mathbb{Q}^n$ and $C \in \mathbb{Q}$. The formal power series
\begin{align}\label{eq:fermionic.sum}
f_{A, B, C}(q) = \sum_{{k} \in \Z_{\geq 0}^r} \frac{q^{\frac{1}{2}{k}^T A {k} + {k}^T {B} + C}}{(q)_{k_1} \cdots (q)_{k_r}}, 
\end{align}
converges, upon setting $q = e^{2\pi i \tau}$, to a holomorphic function $F(\tau)$ of $\tau \in \mathbb{H}$ the upper half complex plane. The asymptotic behaviour of $F(\tau)$ as $\tau = it \rightarrow 0$ along the positive imaginary axis is known to be \cite{vlasenko-zwegers}
\begin{equation}\label{eq:VZ-asymp}
F(i t) \sim e^{\frac{\alpha}{t}}, \quad \text{where} \quad \alpha = \sum_i \left( \frac{\pi^2}{6} - L(Q_i) \right).
\end{equation}
Here
\[
L(z) = \sum_{n=1}^\infty \frac{z^n}{n^2} + \frac{1}{2}\log(z)\log(1-z),
\]
is the Rogers dilogarithm function, and $(Q_1, \ldots, Q_n)$ is the unique solution of the system of equations
\begin{equation}\label{eq:Nahm.system}
1 - Q_i = \prod_j Q_j^{A_{ij}}, \qquad 0 < Q_i < 1 \: \text{ for } \: i = 1,\ldots, n.
\end{equation}

For example if $A = \begin{psmallmatrix}8 & 3 \\ 3 & 2\end{psmallmatrix}$ then
\[
Q_1 = \frac{1}{2}\left(\sqrt{2\sqrt{2}-1} + \sqrt{2} - 1\right)
\quad \text{and} \quad
Q_2 = \frac{2}{\sqrt{2\sqrt{2}-1} - \sqrt{2} + 3}.
\]
Then $\alpha = \frac{\pi^2}{12}$ can be deduced without difficulty from the functional equations satisfied by $L(z)$.


The normalized character $q^{-c_V/24} \chi_{V}(q)$ of a rational $C_2$-cofinite conformal vertex algebra is a modular function on $\Gamma_0(N)$ for some $N$ \cite{zhu}, \cite{dong-lin-ng}. Therefore the asymptotic behaviour of $\chi_V$ is always of the form $e^{\frac{\alpha}{t}}$ where $\alpha = \pi^2 g_V / 6$ for some $g_V \in \mathbb{Q}$ known as the \emph{effective central charge} of $V$. For the Ising model we have $g_V = c_V = 1/2$ and the normalized character is expressible as a linear combination of the Weber modular functions.


The condition $\alpha \in \pi^2 \mathbb{Q}$, where $\alpha$ is given by
\eqref{eq:VZ-asymp}, places a strong restriction on possible fermionic sum
representations of characters of vertex algebras and their modules. The matrix
$A = G^{(s)}$ appearing in connection with the Andrews-Gordon identity above
satisfies the condition and is associated with the vertex algebra $\Vir_{2,
2s+1}$. It was observed by Nahm that the rationality condition is closely
related to torsion elements in Bloch groups of number fields \cite{nahm}.
Terhoeven produced a list of matrices for which the rationality condition is
satisfied \cite{terhoeven}. For most of these matrices $A$ there exists a choice
of $B$ and $C$ for which \eqref{eq:fermionic.sum} becomes modular, but for other
examples such as $\begin{psmallmatrix}8 & 3 \\ 3 & 2\end{psmallmatrix}$, no such
choices had been found \cite{zagier}. Our Theorem \ref{thm:q-ident} shows that
modular candidates can be obtained allowing linear combinations of Nahm 
sums.
\end{nolabel}

\section{A partition identity} \label{sec:partition}
In this section we prove Theorem \ref{thm:partition-theorem} \ref{thm:part-c}.
We recall the set of partitions $\cP(n)$ introduced in Theorem \ref{thm:partition-theorem}. We denote by $p(n, m)$ the number of partitions $\lambda = [\lambda_1, \ldots, \lambda_m] \in \cP(n)$ into exactly $m$ parts and we write the generating function
\[
P(t, q) = \sum_{m, n} p(n, m) t^m q^n.
\]
In this section we prove fermionic sum expressions for $P(t, q)$. To achieve this we divide $\cP(n)$ into five disjoint subsets, we find recurrence relations among the cardinalities of these sets, these lead to a system of functional equations which we solve in Proposition \ref{prop:closed-formulas} to obtain $P(t, q)$. Specialization to $t=1$ yields the generating function \eqref{eq:q-ident} for the number of partitions in $\cP(n)$.
\begin{nolabel} It is not difficult to verify that $\cP(n)$ decomposes as the disjoint union
\[
\cP(n) = \cA(n) \amalg \cB(n) \amalg \cC(n) \amalg \cD(n) \amalg \cE(n),
\]
where, for $n \geq 0$, we define $\cA(0) = \{[]\}$, $\cB(2) = \{[2]\}$, $\cD(4) = \{[2, 2]\}$ and otherwise
\[ 
\begin{aligned}
\cA(n) &= \left\{ \lambda = [\lambda_1,\dots,\lambda_m] \in \cP(n) \: \big| \: 2<\lambda_m \right\}, \\ 
\cB(n) &= \left\{ \lambda = [\lambda_1,\dots,\lambda_m] \in \cP(n) \: \big| \: 2 = \lambda_m < \lambda_{m-1} - 1 \right\}, \\ 
\cC(n) &= \left\{ \lambda = [\lambda_1,\dots,\lambda_m] \in \cP(n) \: \big| \:2 = \lambda_m = \lambda_{m-1}-1 \right\}, \\
\cD(n) &= \left\{  \lambda = [\lambda_1,\dots,\lambda_m] \in \cP(n) \: \big| \: 2 = \lambda_m = \lambda_{m-1}  < \lambda_{m-2} - 2 \right\}, \\ 
\cE(n) &= \left\{  \lambda = [\lambda_1,\dots,\lambda_m] \in \cP(n) \: \big| \: 2= \lambda_m = \lambda_{m-1} = \lambda_{m-2} -2 \right\}.
\end{aligned}
\]
\label{no:subdivision}
\end{nolabel}
\begin{lem} Let $a(n,m)$ (resp. $b(n,m)$, \dots, $e(n,m)$) denote the number of partitions $\lambda \in \cA(n)$ (resp. $\cB(n)$, \dots, $\cE(n)$) into exactly $m$ parts. The following recursive formulas hold
\begin{equation}
\begin{aligned}
a(n,m) &= a(n-m,m)+b(n-m,m)+c(n-m,m)+d(n-m,m), \\ 
b(n,m) &= a(n-m-1,m-1) - d(n-2m,m-1), \\ 
c(n,m) &= b(n-2m+1,m-1) + d(n-2m,m-1), \\ 
d(n,m) &= b(n-m,m-1) - e(n-2m +1,m-1), \\ 
e(n,m) &= c(n-m,m-1).
\end{aligned}
\label{eq:recursion}
\end{equation}
\label{lem:recursion}
\end{lem}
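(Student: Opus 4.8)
The decomposition $\cP(n)=\cA(n)\amalg\cB(n)\amalg\cC(n)\amalg\cD(n)\amalg\cE(n)$ of \ref{no:subdivision} sorts partitions by the multiplicity of the smallest part $2$ and by its neighbours, so the plan is to prove each of the five recurrences in \eqref{eq:recursion} by an explicit map between the relevant sets that tracks both the weight $n$ and the number of parts $m$. Two elementary moves suffice: \emph{column-stripping} $\sigma$, which subtracts $1$ from every part (sending an $m$-part partition of $n$ to an $m$-part partition of $n-m$), and \emph{tail surgery} on the one or two smallest parts (deleting the lone $2$, or merging the two trailing $1$'s produced by $\sigma$ from a pair of $2$'s into a single $2$, together with the inverse splitting), which changes the number of parts by one. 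The all-positive recurrences are honest bijections, while the two recurrences carrying a minus sign are established by inclusion--exclusion.

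I would begin with the clean cases (a) and (e). For (a), $\sigma$ maps $\cA(n,m)$ injectively into the $m$-part partitions of $n-m$; its image is all of $\cP(n-m)$ in $m$ parts \emph{except} $\cE(n-m)$, because reconstructing a partition of $\cE(n-m)$ by adding $1$ to every part turns its tail $[\dots,4,2,2]$ into $[\dots,5,3,3]$ and hence produces the forbidden pattern $[5,3,3]$, the $p=3$ instance of $[p+2,p,p]$ in \eqref{eq:horrible-set}. This is exactly why $\cE$ is absent from the right-hand side of (a). For (e), applying $\sigma$ to $\cE(n,m)$ turns the two trailing $2$'s into two trailing $1$'s; merging them into a single $2$ lands in $\cC(n-m,m-1)$, and splitting the $2$ and applying $\sigma^{-1}$ inverts the map.

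For the signed recurrences (b) and (d) I would set up a near-bijection and identify its defect. In (b), peeling the lone $2$ from $\lambda\in\cB(n,m)$ and applying $\sigma$ to the remaining $m-1$ parts injects $\cB(n,m)$ into $\cA(n-m-1,m-1)$; the partitions of $\cA(n-m-1,m-1)$ missed by this map are precisely those whose reconstruction would create the forbidden pattern $[4,4,2]$, i.e.\ those containing two $3$'s, and these are matched to $\cD(n-2m,m-1)$ by one further application of $\sigma$, yielding the $-d$ term. In (d), $\sigma$ followed by merging injects $\cD(n,m)$ into $\cB(n-m,m-1)$, and the missed partitions (those whose reconstruction creates $[6,5,2,2]$) are in bijection with $\cE(n-2m+1,m-1)$, giving the $-e$ term. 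The remaining recurrence (c) is all-positive but splits $\cC$ in two: according to whether the part immediately above the $[3,2]$ tail attains its minimal admissible value $5$ or not, a partition of $\cC(n,m)$ maps to $\cD(n-2m,m-1)$ or to $\cB(n-2m+1,m-1)$ respectively (the two-part base case $[3,2]$ falling into the latter class).

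The main obstacle will be the avoidance bookkeeping: one must check that each move neither creates nor destroys a forbidden subpartition from \eqref{eq:horrible-set} beyond those deliberately tracked by the correction terms. This is where the precise shape of the forbidden list matters. The list splits into shift-invariant ``difference'' families --- the patterns parametrised by $p$ --- which $\sigma$ and its inverse respect automatically, together with the genuinely exceptional data: the restricted range $p\geq 3$ of $[p+2,p,p]$ and the four sporadic patterns ending in $2,2$. Consequently the verification reduces to a finite hand-check of the configurations near the smallest parts, and I expect the delicate point to be exactly the interface where the difference families break down, governed by $[5,3,3]$, $[4,4,2]$ and $[6,5,2,2]$ together with the sporadic terms.
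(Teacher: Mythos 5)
Your proposal is correct and follows essentially the same route as the paper: the paper proves the first recurrence by exactly your column-stripping map $\sigma$, identifying $\cE(n-m)$ as the missing piece via the equivalence between containing $[4,2,2]$ and producing the forbidden $[5,3,3]$, and then states that the remaining four recurrences are proved similarly. Your tail-surgery bijections and inclusion--exclusion corrections for (b)--(e) (the $[4,4,2]$ defect matched to $\cD$, the $[6,5,2,2]$ defect matched to $\cE$, and the split of $\cC$ by whether the part above the $[3,2]$ tail equals $5$) are correct instantiations of that ``similar way.''
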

\begin{proof}
Let $\lambda = [\lambda_1, \dots, \lambda_m] \in \cA(n)$ and consider $\mu = [\lambda_1
-1,\dots,\lambda_m -1] \in \cP(n-m)$. Notice that $\mu$ avoids $[4,2,2]$ since $[5,3,3]$ is one of the partitions excluded in the definition of $\cP$ (see the third line of \eqref{eq:horrible-set}). It follows that $\mu \in \cA(n-m) \amalg \cB(n-m) \amalg \cC(n-m) \amalg \cD(n-m)$. Conversely, if $\mu$ lies in the latter set then $\lambda = [\mu_1+1, \dots,\mu_m+1]$ lies in $\cP(n)$. The only conditions from \eqref{eq:horrible-set} that are not immediate are that $\lambda$ avoids $[5, 3, 3]$ and the four exceptional partitions listed there. But these partitions are indeed avoided by $\lambda$ because $\mu \notin \cE(n-m)$ and hence avoids $[4,2,2]$, and because $\lambda_m > 2$. Indeed this demonstrates that $\lambda \in \cA(n)$, and the bijection we have thus established proves the first equation in \eqref{eq:recursion}. The other equations are proved in a similar way.
\end{proof}
\begin{lem} Consider the formal power series
\[ A(t,q) = \sum_{n,m \geq 0} a(n,m) q^n t^m, \]
and define similarly $B(t,q), C(t,q), E(t,q), D(t,q)$. Then they satisfy the following functional equations:
\begin{equation}
\begin{aligned}
A(t,q) &= A(t q, q)+B(tq,q)+C(tq,q)+D(tq,q), & A(0,q) &= 1,\\ 
B(t,q) &= t q^2 A(t q,q) - t q^2 D(t q^2,q) , & B(0,q) &= 0, \\
C(t,q) &= t q B(t q^2,q) + t q^2 D(t q^2,q),& C(0,q) &= 0, \\ 
D(t,q) &= t q B(t q, q) - t q E(t q^2,q), & D(0,q) &=0,\\
E(t,q) &= t q C(t q, q) ,&  E(0,q) &=0. 
\end{aligned}
\label{eq:functional}
\end{equation}
\label{lem:functional}
\end{lem}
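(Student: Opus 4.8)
The plan is to obtain each functional equation in \eqref{eq:functional} directly from the corresponding recurrence in Lemma \ref{lem:recursion}, by multiplying through by $t^m q^n$ and summing over all $n, m \geq 0$. The entire argument rests on a short substitution dictionary translating the index shifts in \eqref{eq:recursion} into operations on the generating functions. Decrementing every part of an $m$-part partition, i.e.\ the shift $n \mapsto n-m$, sends $t^m q^n = q^{\,n-m}(tq)^m$ and therefore corresponds to the substitution $t \mapsto tq$; the shift $n \mapsto n-2m$ corresponds in the same way to $t \mapsto tq^2$; removing a single part, i.e.\ $m \mapsto m-1$, produces a factor of $t$; and a constant shift $n \mapsto n-c$ produces a factor $q^c$.

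Concretely, I would process each line of \eqref{eq:recursion} in turn. For the first line, reindexing $\sum_{n,m} a(n-m,m)\,q^n t^m$ by $n' = n-m$ gives $\sum_{n',m} a(n',m)\,q^{n'}(tq)^m = A(tq,q)$, and the identical reindexing of the $b,c,d$ terms produces $B(tq,q)+C(tq,q)+D(tq,q)$, yielding the first equation. For the second line, the term $a(n-m-1,m-1)$ reindexed by $n'=n-m-1$, $m'=m-1$ gives $q^n t^m = q^{n'}q^{m'}q^2\cdot t\,t^{m'}$, so that $\sum a(n-m-1,m-1)\,q^n t^m = tq^2 A(tq,q)$, while similarly $\sum d(n-2m,m-1)\,q^n t^m = tq^2 D(tq^2,q)$. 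The remaining three lines follow from exactly the same reindexings: one checks that $b(n-m,m-1)$ contributes $tq\,B(tq,q)$, that $b(n-2m+1,m-1)$ contributes $tq\,B(tq^2,q)$, that $e(n-2m+1,m-1)$ contributes $tq\,E(tq^2,q)$, and that $c(n-m,m-1)$ contributes $tq\,C(tq,q)$, which assemble into the third, fourth and fifth equations.

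The initial conditions are read off from the $m=0$ coefficients: setting $t=0$ extracts the constant term in $t$, namely $\sum_n a(n,0)q^n$ and so on. Since the empty partition is the unique partition with no parts and lies in $\cA(0)$, whereas $\cB,\cC,\cD,\cE$ contain no partition with zero parts, this gives $A(0,q) = a(0,0) = 1$ and $B(0,q)=C(0,q)=D(0,q)=E(0,q)=0$.

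The one genuine subtlety, and the step I would check most carefully, is that summing each recurrence over \emph{all} $n,m\geq 0$ (with the convention that every count vanishes when an argument is negative) introduces no boundary correction terms from the exceptional base cases $\cA(0)=\{[]\}$, $\cB(2)=\{[2]\}$ and $\cD(4)=\{[2,2]\}$ singled out in \ref{no:subdivision}. I would verify directly that each recurrence in \eqref{eq:recursion} already holds at these small values: at $(n,m)=(0,0)$ the first line reads $a(0,0)=a(0,0)$, at $(2,1)$ the second reads $b(2,1)=a(0,0)-d(0,0)=1$, and at $(4,2)$ the fourth reads $d(4,2)=b(2,1)-e(1,1)=1$, all of which are consistent. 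Hence no correction terms appear, the termwise summation is valid, and the functional equations follow.
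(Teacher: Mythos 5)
Your proposal is correct and is essentially the paper's own argument: the paper's proof consists of the single sentence that the functional equations are obtained by direct translation of the recurrences \eqref{eq:recursion} into generating-function form, which is exactly the substitution dictionary you carry out (and your explicit checks of the reindexings, the initial conditions, and the base cases $\cA(0)$, $\cB(2)$, $\cD(4)$ are all accurate).
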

\begin{proof}
The functional equations are obtained by direct translation of the recurrence relations \eqref{eq:recursion} in terms of the power series we have introduced. 
\end{proof}
\begin{prop}
The unique solution of the system of functional equations \eqref{eq:closed-formulas} is
\begin{equation}
\begin{aligned}A(t,q) 
&=  \sum_{m \geq 0} \frac{t^m q^{m(m+1)}}{(q)_m} \sum_{k =0}^m t^k
q^{(k+1)m+2k^2} \binom{m}{k}_q, \\ 
B(t,q) &= \sum_{m \geq 1} \frac{t^m q^{m(m+1)}}{(q)_{m-1}} \sum_{k=0}^{m-1} t^k
q^{k(m+1)+2k^2} \binom{m-1}{k}_q, \\ 
C(t,q) &= \sum_{m \geq 2} \frac{t^m q^{m^2+1}}{(q)_{m-2}} \sum_{k=0}^{m-2} t^k q^{k(m+3)+2k^2} \binom{m-2}{k}_q, \\
D(t,q) &= \sum_{m \geq 2} \frac{t^m q^{m^2}}{(q)_{m-2}}\sum_{k\geq 0}^{m-2} t^k q^{k(m+2) + 2 k^2} \binom{m-2}{k}_q, \\
E(t,q) &= \sum_{m \geq 3}\frac{t^m q^{m^2-m+2}}{(q)_{m-3}}\sum_{k=0}^{m-3} t^k q^{k(m+3)+2k^2} \binom{m-3}{k}_q.
\end{aligned}
\label{eq:closed-formulas}
\end{equation}
\label{prop:closed-formulas}
\end{prop}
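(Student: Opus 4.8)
The plan is to prove existence and uniqueness separately. I would first show that the system \eqref{eq:functional}, together with its initial conditions, has at most one solution in $\mathbb{C}[[q]][[t]]$ by extracting coefficients of powers of $t$ and exhibiting a well-founded recursion; I would then verify that the five series \eqref{eq:closed-formulas} do satisfy \eqref{eq:functional}, which gives existence.

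For uniqueness, write $A(t,q) = \sum_{m \geq 0} A_m(q) t^m$ and similarly for $B,C,D,E$, with coefficients in $\mathbb{C}[[q]]$. The substitution $t \mapsto tq^{j}$ acts on the coefficient of $t^m$ by multiplication by $q^{jm}$, so extracting the coefficient of $t^m$ from each equation of \eqref{eq:functional} converts the system into the recursion
\begin{align*}
B_m &= q^{m+1} A_{m-1} - q^{2m} D_{m-1}, \\
C_m &= q^{2m-1} B_{m-1} + q^{2m} D_{m-1}, \\
D_m &= q^{m} B_{m-1} - q^{2m-1} E_{m-1}, \\
E_m &= q^{m} C_{m-1}, \\
(1-q^m) A_m &= q^{m}\bigl( B_m + C_m + D_m \bigr),
\end{align*}
valid for $m \geq 1$, with $A_0 = 1$ and $B_0 = C_0 = D_0 = E_0 = 0$. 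Since $1 - q^m$ has constant term $1$, it is invertible in $\mathbb{C}[[q]]$ for $m \geq 1$, so the last line determines $A_m$. The key point is that $B_m, C_m, D_m, E_m$ depend only on data of index $m-1$, and $A_m$ then depends only on $B_m, C_m, D_m$; hence the recursion is triangular and determines all coefficients uniquely by induction on $m$. This shows \eqref{eq:functional} has at most one solution.

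For existence I would substitute the closed forms \eqref{eq:closed-formulas} into \eqref{eq:functional} directly, using that $t \mapsto tq^{j}$ merely shifts the linear and quadratic $q$-exponents in each summand. The last equation $E(t,q) = tq\,C(tq,q)$ is the easiest: after the substitution and the index shift $m \mapsto m-1$, the summand of $tq\,C(tq,q)$ matches that of $E(t,q)$ term by term, so the identity holds with no further input. The other four equations are genuinely more delicate, because the summands of the two sides do \emph{not} agree term by term; for instance in $D(t,q) = tq\,B(tq,q) - tq\,E(tq^2,q)$ the ``main'' piece $tq\,B(tq,q)$ carries inner exponent $k(m+1)+2k^2$ while the target $D$ carries $k(m+2)+2k^2$, and equality appears only after the inner $q$-binomial sums are regrouped. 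The tool for this is the $q$-Pascal recurrence $\binom{m}{k}_q = \binom{m-1}{k-1}_q + q^{k}\binom{m-1}{k}_q = q^{m-k}\binom{m-1}{k-1}_q + \binom{m-1}{k}_q$, applied to split and recombine the inner sums so that the telescoping reproduces exactly the linear combination prescribed by each functional equation.

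I expect the principal obstacle to be the first equation $A(t,q) = A(tq,q) + B(tq,q) + C(tq,q) + D(tq,q)$. It is self-referential in $A$, whose closed form is the most intricate of the five double sums, and it couples all four ``primary'' series at once, so the $q$-binomial bookkeeping is heaviest here; in particular one must absorb the factor $(1-q^m)^{-1}$ arising from the diagonal term $A(tq,q)$. A practical route, consistent with the computer-algebra tools acknowledged in the paper, is to certify these four $q$-binomial identities by creative telescoping (for example with \texttt{qMultiSum}), reducing each to a finite verification. Equivalently, one may bypass the functional-equation form entirely and instead check that the $t$-coefficients of \eqref{eq:closed-formulas} satisfy the triangular recursion displayed above, which amounts to the same family of $q$-binomial identities.
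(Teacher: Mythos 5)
Your proposal is correct and follows essentially the same route as the paper: uniqueness via the triangular $t$-coefficient recursion (which the paper dismisses in one line as ``automatic'' but you rightly spell out, including the invertibility of $1-q^m$ for $m\geq 1$), and existence by substituting the closed forms into \eqref{eq:functional} and matching the double sums after index shifts and applications of $q$-Pascal-type identities such as $\tfrac{1}{(q)_{m-1}}\binom{m-1}{k-1}_q = \tfrac{1}{(q)_m}\binom{m}{k}_q(1-q^k)$. The paper carries out those five verifications entirely by hand rather than by creative telescoping, and your reading of their relative difficulty (the $E$ equation matching term by term, the $A$ equation requiring the heaviest regrouping) agrees with what is actually done there.
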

\begin{proof}
The uniqueness is automatic, it suffices to check that the expressions given in \eqref{eq:closed-formulas} satisfy the functional equations and initial conditions \eqref{eq:functional}. Directly from the definition we have:
\begin{equation}
A(tq,q) = \sum_{m \geq 0} \frac{t^m q^{m(m+1)}}{(q)_m} \sum_{k=0}^m t^k q^{(k+1)m+2k^2} \binom{m}{k}_q q^{m+k}. 
\label{eq:atq}
\end{equation}
Similarly:
\begin{equation}
\begin{aligned}
B(tq,q) &= \sum_{m \geq 1} \frac{t^m q^{m(m+1)}}{(q)_{m-1}} \sum_{k =0}^{m-1} t^k q^{(k+1)m + 2k^2} \binom{m-1}{k}_q q^{2k}  \\ 
&= \sum_{m \geq 0} \frac{t^m q^{m(m+1)}}{(q)_m} \sum_{k = 0}^{m} t^k q^{(k+1)m + 2k^2} \binom{m}{k}_q q^{2k} (1-q^{m-k}).
\label{eq:btq}
\end{aligned}
\end{equation}
For $C(t,q)$ and $D(tq,q)$ we perform a simple algebraic manipulation:
\begin{align}
\begin{split}
C(tq,q) &= \sum_{m \geq 2} \frac{t^m q^{m(m+1))}}{(q)_{m-2}} \sum_{k=0}^{m-2} t^{k} q^{k(m+3)+2k^2} \binom{m-2}{k}_q  q^{k+1} \\ 
&=\sum_{m \geq 1} \frac{t^m q^{(m+1)(m+2)}}{(q)_{m-1}} \sum_{k=0}^{m-1} t^{k+1} q^{k(m+4)+2k^2} \binom{m-1}{k}_q q^{k+1}\\ 
&=\sum_{m \geq 1} \frac{t^m q^{(m+1)(m+2)}}{(q)_{m-1}} \sum_{k=1}^{m} t^{k} q^{(k-1)(m+4)+2(k-1)^2} \binom{m-1}{k-1}_q q^{k}\\ 
&= \sum_{m \geq 1} \frac{t^m q^{m(m+1)}}{(q)_{m-1}} \sum_{k=1}^{m} t^k q^{(k+1)m+2k^2} \binom{m-1}{k-1}_q q^k,
\label{eq:ctq}
\end{split} \\ 
\begin{split}
D(tq,q) &= \sum_{m \geq 2} \frac{t^m q^{m(m+1))}}{(q)_{m-2}} \sum_{k=0}^{m-2} t^{k} q^{k(m+3)+2k^2} \binom{m-2}{k}_q  \\ 
&=\sum_{m \geq 1} \frac{t^m q^{(m+1)(m+2)}}{(q)_{m-1}} \sum_{k=0}^{m-1} t^{k+1} q^{k(m+4)+2k^2} \binom{m-1}{k}_q \\ 
&=\sum_{m \geq 1} \frac{t^m q^{(m+1)(m+2)}}{(q)_{m-1}} \sum_{k=1}^{m} t^{k} q^{(k-1)(m+4)+2(k-1)^2} \binom{m-1}{k-1}_q \\ 
&= \sum_{m \geq 1} \frac{t^m q^{m(m+1)}}{(q)_{m-1}} \sum_{k=1}^{m} t^k q^{(k+1)m+2k^2} \binom{m-1}{k-1}_q.
\label{eq:dtq}
\end{split}
\end{align}
Summing \eqref{eq:ctq} and \eqref{eq:dtq} and using 
\[\frac{1}{(q)_{m-1}} \binom{m-1}{k-1}_q = \frac{1}{(q)_m} \binom{m}{k}_q (1 - q^k), \]
we obtain 
\begin{equation}
C(tq,q)+D(tq,q) =  \sum_{m \geq 0} \frac{t^m q^{m(m+1)}}{(q)_m} \sum_{k=0}^{m} t^k q^{(k+1)m+2k^2} \binom{m}{k}_q (1-q^{2k}). 
\label{eq:cdtq}
\end{equation}
Adding \eqref{eq:atq}, \eqref{eq:btq} and \eqref{eq:cdtq} we obtain the first equation in \eqref{eq:functional}. 
\begin{equation*}
\begin{split}
\MoveEqLeft
t q^2 A(tq,q) - tq^2 D(tq^2,q) = \sum_{m \geq 0} \frac{t^{m+1}q^{m^2+3m+2}}{(q)_m} \sum_{k=0}^{m} t^k q^{k(m+1)+2k^2} \binom{m}{k}_q \\ 
&= - \sum_{m\geq 2} \frac{t^{m} q^{m^2 + 2m +2}}{(q)_{m-2}} \sum_{k=0}^{m-2} t^{k+1} q^{k(m+4) + 2k^2}\binom{m-2}{k}_q  \\ 
&= \sum_{m \geq 1} \frac{t^{m}q^{m^2+m}}{(q)_{m-1}} \sum_{k=0}^{m-1}t^k q^{km + 2k^2} \binom{m-1}{k}_q \\ 
&\quad - \sum_{m \geq 2} \frac{t^m q^{m^2+2m+2}}{(q)_{m-2}} \sum_{k=1}^{m-1} t^k
q^{(k-1)(m+4)+2(k-1)^2} \binom{m-2}{k-1}_q \\
&= \sum_{m\geq 1} \frac{t^m q^{m(m+1)}}{(q)_{m-1}} \sum_{k=0}^{m-1}t^k q^{k(m+1)+2k^2}\binom{m-1}{k}_q q^{-k} \\ 
&\quad - \sum_{m \geq 1} \frac{t^{m} q^{m(m+1)}}{(q)_{m-1}} \sum_{k=0}^{m-1} t^k q^{k(m+1)+2k^2} \binom{m-1}{k-1}_q q^{-k}(1-q^{m-k})  \\
&=\sum_{m\geq 1} \frac{t^m q^{m(m+1)}}{(q)_{m-1}} \sum_{k=0}^{m-1}t^k q^{k(m+1)+2k^2}\binom{m-1}{k}_q q^{-k} \\ 
& \quad - \sum_{m \geq 1} \frac{t^{m} q^{m(m+1)}}{(q)_{m-1}} \sum_{k=0}^{m-1} t^k q^{k(m+1)+2k^2} \binom{m-1}{k}_q (q^{-k}-1) = B(t,q) 
\end{split}
\end{equation*}
proving the second equation in \eqref{eq:functional}. 
\begin{equation*}
\begin{split}
\MoveEqLeft
tq B(t q^2,q)+t q^2 D(t q^2,q) = \sum_{m \geq 1} \frac{t^{m+1}q^{m^2+3m+1}}{(q)_{m-1}} \sum_{k=0}^{m-1} t^k q^{k(m+3)+2k^2}\binom{m-1}{k}_q \\
&\quad + \sum_{m\geq 2}\frac{t^{m+1} q^{m^2+2m +2}}{(q)_{m-2}} \sum_{k=0}^{m-2} t^k q^{k(m+4) + 2k^2} \binom{m-2}{k}_q  \\ 
&= \sum_{m \geq 2} \frac{t^m q^{m^2+m-1}}{(q)_{m-2}} \sum_{k=0}^{m-2} t^k q^{k(m+2)+2k^2} \binom{m-2}{k}_q \\ 
&\quad + \sum_{m \geq 3} \frac{t^{m} q^{m^2 +1}}{(q)_{m-3}} \sum_{k=0}^{m-3} t^{k} q^{k(m+3)+2k^2}\binom{m-3}{k}_q  \\ 
&=\sum_{m \geq 2} \frac{t^m q^{m^2+1}}{(q)_{m-2}} \sum_{k=0}^{m-2} t^k q^{k(m+3)+2k^2}\binom{m-2}{k}_q q^{m-2-k} \\ 
&\quad + \sum_{m \geq 2} \frac{t^m q^{m^2+1}}{(q)_{m-2}} \sum_{k=0}^{m-2} t^{k}q^{k(m+3)+2k^2} \binom{m-2}{k}_q (1-q^{m-2-k}) = C(t,q)
\end{split}
\end{equation*}
proving the third equation in \eqref{eq:functional}.
\begin{equation*}
\begin{split}
\MoveEqLeft
tq B(tq, q) - D(t,q) = \sum_{m \geq 2} \frac{t^m q^{m^2}}{(q)_{m-2}} \sum_{k =1}^{m-2} t^k q^{k(m+1)+2k^2}\binom{m-2}{k}_q (1-q^k) \\ 
&= \sum_{m \geq 3} \frac{t^{m}q^{m^2}}{(q)_{m-2}} \sum_{k = 0}^{m-3} t^{k+1}q^{(k+1)(m+1)+2(k+1)^2}\binom{m-2}{k+1}_q(1-q^{k+1}) \\ 
&= \sum_{m \geq 2} \frac{t^{m+1} q^{m^2+m+3}}{(q)_{m-3}} \sum_{k=0}^{m-3} t^{k} q^{k(m+5)+2k^2} \binom{m-3}{k}_q = tq E(t q^2,q), 
\end{split}
\end{equation*}
proving the fourth equation in \eqref{eq:functional}.
Finally to prove the fifth equation:
\begin{multline*}
t q C(tq,q) = \sum_{m \geq 2} \frac{t^{m+1}q^{m^{2}+m+2}}{(q)_{m-2}} \sum_{k = 0}^{m-2} t^{k} q^{k(m+4)+2k^2} \binom{m-2}{k}_q \\ 
= \sum_{m \geq 3}\frac{t^m q^{m^2-m+2}}{(q)_{m-3}}\sum_{k=0}^{m-3} t^k q^{k(m+3)+2k^2} \binom{m-3}{k}_q = E(t,q).
\end{multline*}
\end{proof}
\begin{rem}
Expanding the q-binomial coefficients with \eqref{eq:binom} and replacing $k_1 = k$, $k_2 = m-k$. We obtain the following \emph{quasi-particle} representations for the above power series:
\begin{equation}
\begin{aligned}
A(t,q) 
&= \sum_{k_1, k_2 \in \Z_{\geq 0}} \frac{t^{2k_1+k_2} q^{4k_1^2 + 3k_1k_2 +
k_2^2 + 2k_1 + 2k_2}}{(q)_{k_1}(q)_{k_2}}, \\
B(t,q) 
 &= t q^2 \sum_{k_1, k_2 \in \Z_{\geq 0}} \frac{t^{2k_1+k_2} q^{4k_1^2 + 3k_1k_2 +
k_2^2 + 5k_1 + 3k_2}}{(q)_{k_1}(q)_{k_2}}, \\
C(t,q) 
 &= t^2 q^5 \sum_{k_1, k_2 \in \Z_{\geq 0}} \frac{t^{2k_1+k_2} q^{4k_1^2 + 3k_1k_2 +
k_2^2 + 9k_1 + 4k_2}}{(q)_{k_1}(q)_{k_2}}, \\
D(t,q)
 &= t^2 q^4 \sum_{k_1, k_2 \in \Z_{\geq 0}} \frac{t^{2k_1+k_2} q^{4k_1^2 + 3k_1k_2 +
k_2^2 + 8k_1 + 4k_2}}{(q)_{k_1}(q)_{k_2}}, \\
E(t,q)
 &= t^3 q^8 \sum_{k_1, k_2 \in \Z_{\geq 0}} \frac{t^{2k_1+k_2} q^{4k_1^2 + 3k_1k_2 + k_2^2 + 11k_1 + 5k_2}}{(q)_{k_1}(q)_{k_2}}.
\end{aligned}
\label{eq:quasi-particle}
\end{equation}
Each of these series, specialized at $t=1$ is a Nahm sum \cite{zagier}
associated to the matrix $\begin{psmallmatrix}8 & 3 \\ 3 & 2\end{psmallmatrix}$
as explained in \ref{no:nahmsum2}.
\end{rem}
\begin{proof}[Proof of Theorem \ref{thm:partition-theorem} \ref{thm:part-c}] By summing \eqref{eq:quasi-particle} we obtain \eqref{eq:pmn.gen.function}.
\end{proof}
\section{Rogers-Ramanujan-Slater Type identities} \label{sec:polynomial-ident}
In this section we prove Theorem \ref{thm:partition-theorem} \ref{thm:part-b} and
prove Theorems \ref{thm:q-ident} and \ref{thm:modules}. The strategy is to first
prove a finite version of these identities. We find families of polynomials
depending on a parameter $n \in \mathbb{Z}_+$. Their limits as $n \rightarrow
\infty$ are the appropriate sides of our $q$-series identities. We prove the
finite version of these identities by finding a recurrence equation satisfied by
both families. This is achieved with the \texttt{Mathematica} package
\texttt{qMultiSum} from RISC
\cite{qmultisum}.

\begin{nolabel}Define the following families of polynomials in $S_n, T_n \in
\mathbb{Z}[q]$. 
\begin{equation}
\begin{aligned}
S_n = S^0_n &= \sum_{k \geq 0} q^{2 k^2} \binom{n-k}{2k}_q, \\
T_n = T^0_n &=
\sum_{m,k\geq 0} q^{m^2+3k m+4k^2} \left( \binom{n-3k-m}{k}_q
\binom{n-4k-m}{m}_q \right. \\ &\quad  - \left.  q^k \binom{n-3k-m-1}{k}_q\binom{n-4k-m-1}{m-1}_q \right).
\end{aligned}
\label{eq:finite-version}
\end{equation}
\label{no:pol-family-def}
\end{nolabel}
\begin{lem}
\begin{align}
\lim_{n \rightarrow \infty} S_n &= \sum_{k \geq 0} \frac{q^{2k^2}}{(q)_{2k}} =
\chi_{\Vir_{3,4}}, \\ 
\lim_{n \rightarrow \infty} T_n &= \sum_{m,k \geq 0} \frac{q^{m^2 + 3k m+
4k^2}}{(q)_k (q)_m}\left( 1 -q^k + q^{k+m} \right). 
\label{eq-limits}
\end{align}
\label{lem:limits}
\end{lem}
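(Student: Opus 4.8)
The plan is to evaluate both limits term by term using the standard limit of the Gaussian binomial coefficient. The key ingredient is that, for fixed integers $a$ and $k \geq 0$,
\[
\lim_{n \to \infty} \binom{n+a}{k}_q = \frac{1}{(q)_k},
\]
which follows at once from $\binom{n+a}{k}_q = \tfrac{(q)_{n+a}}{(q)_k (q)_{n+a-k}}$ together with $\tfrac{(q)_{n+a}}{(q)_{n+a-k}} = \prod_{j=1}^{k}(1 - q^{n+a-k+j}) \to 1$ as $n \to \infty$. I would first record this, interpreting all limits as limits of formal power series in $q$: for each fixed $N$ the coefficient of $q^N$ in $\binom{n+a}{k}_q$ is eventually constant in $n$, so the limits make sense coefficientwise.

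For $S_n$ the computation is immediate. In each summand $q^{2k^2}\binom{n-k}{2k}_q$ the prefactor has $q$-order $2k^2$, so for a fixed power $q^N$ only the finitely many terms with $2k^2 \le N$ contribute, and each such term stabilizes to $q^{2k^2}/(q)_{2k}$. This gives $\lim_n S_n = \sum_{k \ge 0} q^{2k^2}/(q)_{2k}$, which is exactly the third expression in \eqref{eq:alternatives} for $\chi_{\Vir_{3,4}}$, proving the first equality.

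For $T_n$ I would argue the same way: each summand carries the prefactor $q^{m^2+3km+4k^2}$ of $q$-order growing quadratically in $(k,m)$, so degreewise only finitely many $(k,m)$ contribute to any fixed power of $q$, which justifies interchanging $\lim_n$ with the double summation. Passing to the limit inside each summand, the four Gaussian binomials tend to $1/(q)_k$, $1/(q)_m$, $1/(q)_k$ and $1/(q)_{m-1}$ respectively, so
\[
\lim_{n\to\infty} T_n = \sum_{m,k \ge 0} \frac{q^{m^2+3km+4k^2}}{(q)_k}\left( \frac{1}{(q)_m} - \frac{q^k}{(q)_{m-1}}\right).
\]
For $m = 0$ the second binomial $\binom{\,\cdot\,}{-1}_q$ vanishes, so that summand reduces to $q^{4k^2}/(q)_k$ and no boundary correction is needed. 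The final step is the elementary identity
\[
\frac{1}{(q)_m} - \frac{q^k}{(q)_{m-1}} = \frac{1}{(q)_m}\bigl( 1 - q^k(1-q^m) \bigr) = \frac{1 - q^k + q^{k+m}}{(q)_m},
\]
obtained from $(q)_m = (q)_{m-1}(1-q^m)$, which converts the bracket into the factor $1 - q^k + q^{k+m}$ and yields the second limit.

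The computation is thus entirely direct. The only point requiring genuine care is the degreewise stabilization that legitimizes the term-by-term passage to the limit; since the $q$-order of the summands grows quadratically, this is routine, and I expect it to be the only nontrivial bookkeeping in the argument.
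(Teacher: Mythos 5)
Your proposal is correct and follows exactly the paper's (very terse) proof: the paper simply cites the limit $\lim_{n\to\infty}\binom{n}{k}_q = 1/(q)_k$ together with the third line of \eqref{eq:alternatives}, and you have filled in the routine details (degreewise stabilization, the vanishing of $\binom{\cdot}{-1}_q$ at $m=0$, and the identity $\tfrac{1}{(q)_m}-\tfrac{q^k}{(q)_{m-1}} = \tfrac{1-q^k+q^{k+m}}{(q)_m}$) correctly.
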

\begin{proof}
This is a simple application of the third line of \eqref{eq:alternatives} and 
\[ \lim_{n \rightarrow \infty} \binom{n}{k}_q = \frac{1}{(q)_k}.\]
\end{proof}
\begin{prop}For every $n \geq 0$ we have
\[ S_n = T_n. \]
\label{prop:equality}
\end{prop}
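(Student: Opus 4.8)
The plan is to prove $S_n = T_n$ by exhibiting a single linear $q$-difference operator in $n$, with coefficients in $\mathbb{Z}[q,q^n]$, that annihilates both sequences, and then checking that they agree on enough initial values; the equality propagates to all $n$ by induction. The tool for producing and certifying the operator is $q$-creative telescoping, carried out with \texttt{qMultiSum}.

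First I would treat the single sum. The summand $F(n,k) = q^{2k^2}\binom{n-k}{2k}_q$ is a $q$-proper-hypergeometric term in $(n,k)$ with compact support in $k$ (it is nonzero only for $3k \le n$). Running $q$-Zeilberger produces a homogeneous recurrence $\sum_{i=0}^{r} a_i(q,q^n)\,S_{n+i} = 0$ together with a certificate $G(n,k)$ satisfying $\sum_i a_i(q,q^n) F(n+i,k) = G(n,k+1) - G(n,k)$. Because the support is finite, summing the certificate over $k$ kills the telescoped boundary and yields the recurrence with no inhomogeneous term. I record this operator as $\mathcal{R}$.

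Next I would turn to the double sum. Writing $T_n = \sum_{m,k} \big( P(n,m,k) - Q(n,m,k) \big)$, where $P$ and $Q$ are the two products of $q$-binomials in \eqref{eq:finite-version}, each of $P,Q$ is $q$-proper-hypergeometric in $(n,m,k)$ and compactly supported. Rather than compute an independent recurrence and compare, the efficient route is to feed $\mathcal{R}$ to \texttt{qMultiSum} and ask it to verify that $T_n$ is annihilated by $\mathcal{R}$: the algorithm searches for certificates $G_1, G_2$ in the two summation variables with $\mathcal{R}\,[P-Q](n,m,k) = (\Delta_m G_1)(n,m,k) + (\Delta_k G_2)(n,m,k)$, after which summation over $m,k$ again annihilates the boundary, establishing $\mathcal{R}\,T_n = 0$. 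Once both $S_n$ and $T_n$ satisfy $\mathcal{R}$, I would close the argument by checking $S_n = T_n$ directly for $n = 0,1,\dots,r-1$, enlarging this range if necessary to absorb any indices at which the leading coefficient $a_r(q,q^n)$ degenerates. As both sides are explicit polynomials in $q$, this final check is finite and mechanical.

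I expect the main obstacle to be the telescoping for the double sum. The difficulty is twofold. First, the summand is a genuine difference of two distinct hypergeometric products, so one may need to run the telescoping on $P$ and $Q$ separately and then combine the resulting operators by a least common left multiple before checking annihilation, or else arrange the off-by-one shifts present in $Q$ (the binomials $\binom{n-3k-m-1}{k}_q$ and $\binom{n-4k-m-1}{m-1}_q$) so that the two pieces admit a shared certificate. Second, the order produced by the multivariate algorithm could in principle exceed $r$; verifying that the \emph{specific} operator $\mathcal{R}$ lies in the annihilator of $T_n$, rather than matching two independently computed operators, sidesteps this mismatch, but it hinges on the boundary terms of both the $m$- and the $k$-telescoping vanishing, which must be confirmed against the precise summation ranges cut out by the shifted $q$-binomials.
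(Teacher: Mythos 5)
Your proposal matches the paper's proof: the authors use \texttt{qMultiSum} to produce an explicit order-$8$ $q$-difference recurrence (with leading coefficient $1$, so no degeneracy issue arises) satisfied by $S_n$, verify by the same telescoping machinery that $T_n$ satisfies the identical recurrence, and finish by checking initial values. Your extra caution about boundary terms and the difference-of-two-products structure of $T_n$ is sensible but does not change the argument.
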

\begin{proof}
In order to prove this proposition we use the \texttt{Mathematica} package
\texttt{qMultiSum}. We first find a recurrence
equation satisfied by the family of polynomials $S_n$. Indeed the following is
true:
\begin{equation}
q^{4n+15}S_n + q^{2n+11}(1+q)(S_{n+3} - S_{n+4}) - q^3 S_{n+5} + 
(1+q+q^2)(qS_{n+6} - S_{n+7}) + S_{n+8} = 0. 
\label{eq:recurrenceS}
\end{equation}
\begin{Verbatim}
In[1]:= <<RISC`qMultiSum`
	qMultiSum Package version 2.54
	written by Axel Riese
	Copyright Research Institute for Symbolic Computation (RISC),
	Johannes Kepler University, Linz, Austria

In[2]:= s[n_,k_] := q^(2k^2) qBinomial[n-k,2k,q]
	strsetsn = qFindStructureSet[s[n,k],{n},{k},{1},{2},2]

Out[3]= {{{2,0},{3,0},{4,0},{4,1},{5,0},{5,1},{6,0},{6,1},{7,0},{7,1},{10,3}},
	{{1,0},{2,0},{3,0},{3,1},{4,0},{4,1},{5,0},{5,1},{6,0},{6,1},{9,3}},
	{{1,0},{2,0},{3,0},{4,0},{4,1},{5,0},{5,1},{6,0},{6,1},{7,0},{7,1},{9,2}},
	{{0,0},{1,0},{2,0},{3,0},{3,1},{4,0},{4,1},{5,0},{5,1},{6,0},{6,1},{8,2}},
	{{0,0},{1,0},{2,0},{3,0},{3,1},{4,0},{4,1},
	{5,0},{5,1},{6,0},{6,1},{7,0},{7,1},{8,1}}}

In[4]:= recsn = qFindRecurrence[s[n, k], {n}, {k}, {1}, {2}, 2, StructSet -> %[[1]]]

Out[4]= {q^(4 n) F[-8+n,-3+k]+q^(9+2 n) (q^3+q^4+q^(2 k)) F[-5+n,-1+k]-
	q^(11+2 k+2 n) F[-5+n,k]-
	q^(9+n) (q^(5+k)+q^(2+n)+q^(3+n)+q^(4+n)+q^(2 k+n)) F[-4+n,-1+k]+
	q^(11+n) (q^(4+k)+q^n+q^(2 k+n)) F[-4+n,k]+
	q^(11+n) (q^(2+k)+q^(3+k)+q^n) F[-3+n,-1+k]-
	q^11 (q^9+q^(2 n)+q^(3+k+n)+q^(4+k+n)) F[-3+n,k]-
	q^(13+k+n) F[-2+n,-1+k]+
	q^14 (q^4+q^5+q^6+q^(k+n)) F[-2+n,k]-q^17 (1+q+q^2) F[-1+n,k]+q^17 F[n,k]==0}

In[5]:= Srec = qSumRecurrence[recsn]

Out[5]= {q^(15+4 n) SUM[n]+q^(11+2 n) (1+q) SUM[3+n]-q^(11+2 n) (1+q) SUM[4+n]-
	q^3 SUM[5+n]+q (1+q+q^2) SUM[6+n]+(-1-q-q^2) SUM[7+n]+SUM[8+n]==0}
\end{Verbatim}
In a similar way we find that the same recurrence \eqref{eq:recurrenceS} is satisfied by $T_n$. It remains to show the initial cases which Mathematica handles
easily.  
\end{proof}
In the same way we prove the following 
\begin{prop}Define the following families of polynomials
\begin{align*}
S^{1/2}_n &= \sum_{k \geq 1} q^{2k^2 -2k} \binom{n-k}{2k-1}_q, \\
S^{1/16}_n &= \sum_{k \geq 0} q^{2k^2+k} \binom{n-k}{2k+1}_q,  \\
T^{1/2}_n &= \sum_{m,k \geq 0} q^{m^2 + 3 k m+ 4k^2+2k} \left(
\binom{n-3k-m-1}{k}_q \binom{n-4k-m-1}{m}_q  \right. \\ 
& \quad - \left. q^{4m+8k+6} \binom{n-3k-m-5}{k}_q \binom{n-4k-m-5}{m}_q \right), \\ 
T^{1/16}_n &= \sum_{m,k \geq 0} q^{m^2 +3 k m +4k^2} \left( q^{k+m} \binom{n-3k-m-1}{k}_q \binom{n-4k-m-1}{m}_q \right. \\ 
&\quad + \left. q^{m+4k+1} \binom{n-3k-m-2}{k}_q \binom{n-4k-m-2}{m}_q \right).
\end{align*}
Then for all $n \geq 1$ we have
\[ S^{1/2}_n = T^{1/2}_n, \qquad S^{1/16}_n = T^{1/16}_n. \]
\label{prop:modules-pol}
\end{prop}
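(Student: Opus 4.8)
The plan is to follow verbatim the strategy used for Proposition \ref{prop:equality}. For each of the two module characters we have a single-sum family ($S^{1/2}_n$ or $S^{1/16}_n$) and a double-sum family ($T^{1/2}_n$ or $T^{1/16}_n$), and it suffices, in each case, to exhibit one homogeneous linear $q$-recurrence in $n$ satisfied by \emph{both} families, together with enough matching initial values to determine the common solution uniquely.

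First I would treat the single sums. Since $S^{1/2}_n$ and $S^{1/16}_n$ are single $q$-binomial sums of the same shape as $S^0_n$ in \eqref{eq:finite-version}, I would feed each in turn through the pipeline \texttt{qFindStructureSet}, \texttt{qFindRecurrence}, \texttt{qSumRecurrence}, exactly as was done to produce \eqref{eq:recurrenceS}, obtaining an explicit recurrence for the sum. The replacement $2k \mapsto 2k-1$ (resp.\ $2k+1$) in the lower $q$-binomial index, and the modified quadratic prefactor $q^{2k^2-2k}$ (resp.\ $q^{2k^2+k}$), only alter the coefficients of the resulting recurrence, not the applicability of the method.

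Next I would handle the double sums with the multisum version of $q$-Zeilberger's algorithm implemented in \texttt{qMultiSum}, the same routine that certified the recurrence for $T^0_n$. Each $T^{\bullet}_n$ is a difference of two double $q$-hypergeometric terms sharing the quadratic exponent $m^2 + 3km + 4k^2$ and differing only by elementary prefactors and index shifts, so the two summands can be processed together. I would run the package on each $T^{\bullet}_n$ and verify that its output recurrence, after clearing denominators and normalizing, coincides with the recurrence already found for the corresponding single sum.

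Finally, once a common recurrence of some order $r$ is in hand, I would confirm $S^{\bullet}_n = T^{\bullet}_n$ for the finitely many initial indices $n = 1, \dots, r$ by direct evaluation, a check Mathematica performs instantly. The main obstacle is purely computational: certifying the recurrences for $T^{1/2}_n$ and $T^{1/16}_n$, whose creative-telescoping certificates are sizeable, and confirming that the package returns \emph{the} single-sum recurrence rather than a proper operator multiple of it. Should the two recurrences fail to match exactly, it suffices to pass to the common solution of the two annihilating operators and check it against a correspondingly larger set of initial values.
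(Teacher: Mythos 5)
Your proposal is correct and mirrors the paper's own argument: the paper proves this proposition ``in the same way'' as Proposition \ref{prop:equality}, i.e.\ by using \texttt{qMultiSum} to certify a common recurrence for each $S^{\bullet}_n$ and $T^{\bullet}_n$ pair and then checking initial values. Your added fallback (passing to a common annihilating operator if the recurrences differ) is a sensible precaution but not needed beyond what the paper does.
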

\begin{proof}[Proof of Theorems \ref{thm:q-ident} and \ref{thm:modules}]
Theorem \ref{thm:q-ident} follows directly from Lemma \ref{lem:limits} and
Proposition
\ref{prop:equality}. The other identities in Theorem \ref{thm:modules} follow by
taking the limit $n \rightarrow \infty$ in Proposition \ref{prop:modules-pol}.
\end{proof}
\begin{proof}[Proof of Theorem \ref{thm:partition-theorem} \ref{thm:part-b}] 
The specialization to $t=1$ of \eqref{eq:pmn.gen.function} is given by the RHS
of \eqref{eq:q-ident} which by Theorem \ref{thm:q-ident} equals any of the
alternative forms \eqref{eq:alternatives}. In particular, the last line of
\eqref{eq:alternatives} counts the partitions mentioned in Theorem
\ref{thm:partition-theorem} \ref{thm:part-b}. 
\end{proof}

\section{A PBW basis for the Ising model} \label{sec:PBW}
In this section we finish the proof of Theorem \ref{thm:partition-theorem} and
prove Theorem \ref{thm:main} as a consequence of Theorem \ref{thm:q-ident}. The computations in
this section were carried out with the help of SageMath \cite{sagemath}. For an
implementation of vertex algebras see also \cite{sagevertex}. 

Consider the differential polynomial algebra $\mathbb{C}[L_{-2},L_{-3},\dots]$, with grading defined by $\deg L_{-n} = n$, and the derivation given by $\partial L_{-n} = (n-1) L_{-n-1}$. The \emph{grevlex} monomial ordering on this ring is defined as follows. For a partition $\lambda = [\lambda_1, \lambda_2, \dots, \lambda_m]$ in which $\lambda_1 \geq \lambda_2 \geq \dots \lambda_m \geq 2$ we write \[L_\lambda = L_{-\lambda_1} \cdots L_{-\lambda_m} \in \mathbb{C}[L_{-2},L_{-3},\dots].\] Given two partitions $\lambda, \mu$, we say that $L_\lambda <  L_\mu$ if $\deg L_\lambda < \deg L_\mu$ or if $\deg L_\lambda = \deg L_\mu$ and there exists $i \geq 1$ such that $\lambda_j = \mu_j$ for $0 < j < i$ and $\lambda_i > \mu_i$. Let 
\[ I = \left( a,b \right)_{\partial} \subset \mathbb{C}[L_{-2},L_{-3},\dots], \]
be the differential ideal generated by 
\[ a = L_{-2}^3, \qquad b = L_{2}L_{-3}L_{-4} + \frac{1}{6} L_{-5}L_{-2}^2. \]
The main technical result of this section is the following:
\begin{prop}
For each partition $\lambda$ in \eqref{eq:horrible-set}, there exists an element
of $I$ with leading monomial $L_\lambda$.
\label{prop:technical}
\end{prop}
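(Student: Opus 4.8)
The plan is to exploit two features of the differential ideal $I = (a,b)_\partial$. First, $I$ contains every element of the form $m\,\partial^k a$ and $m\,\partial^k b$, where $m$ ranges over monomials in $\mathbb{C}[L_{-2},L_{-3},\dots]$ and $k \geq 0$, together with all their $\mathbb{C}$-linear combinations. Second, since grevlex is a monomial order we have $\LT(m f) = m\,\LT(f)$ for any monomial $m$, so that once an element of $I$ with leading monomial $L_\mu$ is in hand, every multiple $L_{-n}L_\mu$ is realized as a leading monomial as well. The only operation that fails to respect leading terms is the derivation $\partial$, and the whole content of the proposition lies in controlling $\LT(\partial^k g)$ for suitable $g \in I$. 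It therefore suffices to produce, for each partition listed in \eqref{eq:horrible-set}, one explicit element of $I$ whose leading monomial is exactly $L_\lambda$.

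First I would dispose of the infinite families, treating each by a uniform construction in the parameter $p$. The prototype is the family $[p,p,p]$: a direct calculation gives $\LT(\partial^{3(p-2)} a) = L_{[p,p,p]}$ up to a nonzero scalar, the point being that among the monomials produced by iterating $\partial$ on $a = L_{-2}^3$ the balanced partition $[p,p,p]$ has the smallest leading parts and is hence grevlex-maximal. The remaining families $[p+1,p,p]$, $[p+1,p+1,p]$, $[p+2,p+1,p]$, $[p+2,p+2,p]$, $[p+2,p,p]$ and the four- and five-part families are handled by combining derivatives of $a$ and $b$ with monomial multiplication, the leading-term assertions being proved by induction on $p$. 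In the inductive step one applies $\partial$ to the element furnished at the previous stage and corrects the result by subtracting lower multiples of $a$ and $b$ already lying in $I$, exactly as the defining relation $b$ was engineered to cancel the $[4,3,2]$ term of $\partial^3 a$; one then checks that the target partition survives with nonzero coefficient and remains maximal.

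For the four exceptional partitions $[5,4,2,2]$, $[7,6,4,2,2]$, $[7,7,4,2,2]$ and $[9,8,6,4,2,2]$, which fall outside the uniform pattern, I would simply exhibit explicit elements of $I$ — each a finite $\mathbb{C}$-linear combination of monomial multiples of iterated derivatives of $a$ and $b$, found with the help of SageMath — and verify the claimed leading monomial by a finite computation. Finally, the stronger statement used in Section~\ref{no:strategy}, namely that $L_\lambda$ is a leading monomial for every $\lambda$ containing one of the partitions in \eqref{eq:horrible-set}, follows immediately from the multiplicativity $\LT(m f) = m\,\LT(f)$ applied to the elements constructed above.

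The main obstacle is the control of $\partial$. Since $\partial$ sends a monomial to a sum of monomials, each with a positive integer coefficient, it neither fixes nor simply rescales leading terms, and the grevlex-largest contribution may cancel against another, or be overtaken, as $p$ grows. The delicate part is thus to select, at each stage and for each family, the precise linear combination of derivatives and monomial multiples of $a$ and $b$ for which the desired partition appears with nonzero coefficient and dominates all competing monomials, and to organize these choices into an induction that is uniform in $p$. The four exceptional partitions are precisely the finitely many places where no such uniform choice is available and an \emph{ad hoc} element must be supplied.
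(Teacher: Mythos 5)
Your overall strategy coincides with the paper's: realize each listed partition as the leading monomial of an explicit $\mathbb{C}$-linear combination of monomial multiples of derivatives of $a$ and $b$, uniformly in the family parameter; use $\LT(mf)=m\,\LT(f)$ to pass to all partitions containing a listed one; and treat the four exceptional partitions by ad hoc elements found by computer. Your argument for $[p,p,p]$ is correct (every coefficient of $\partial^{n}a$ is positive and the balanced partition is grevlex-maximal among three-part partitions of $3p$), and for the same reason $[p+1,p,p]$ and $[p+1,p+1,p]$ come for free from $\partial^{n}a$ for the other residues of $n$ modulo $3$; the remaining three-part families each require cancelling only the top one or two terms of $\partial^{n}b$ against a single multiple of $\partial^{n+3}a$, which is routine once the coefficients are known as polynomials in the parameter.

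The gap is in the four- and five-part families, where the existence of a combination with the stated leading monomial is asserted (``one then checks that the target partition survives with nonzero coefficient and remains maximal'') but not established, and this existence is the entire content of the proposition. The inductive mechanism you propose --- apply $\partial$ to the element furnished at stage $p$ and correct by lower multiples of $a$ and $b$ --- is not the one that works: $\partial$ raises the degree by $1$ while advancing the family parameter raises it by the length $m$ of the partition, so one must apply $\partial$ $m$ times, and the intermediate leading monomials leave the list \eqref{eq:horrible-set} and must themselves be controlled; moreover the corrections needed are not multiples of $a$ and $b$ alone but of the previously constructed elements $r_k,s_k,t_k$ and of $\partial^{(n)}a$. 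The paper's actual proof first establishes, by induction on $k$, expansions of $\partial^{(3k+i)}a$ and $\partial^{(3k+i)}b$ recording their top six to eight monomials with coefficients that are explicit cubic polynomials in $k$, and then solves a linear system over $\mathbb{Q}(k)$ to produce combinations ($u_k,v_k,w_k,y_k,z_k$) of up to nine such building blocks, with coefficients that are polynomials --- in one case a rational function --- in $k$ of degree up to five. That this system is solvable, i.e.\ that all grevlex-larger monomials can be cancelled so that, say, $L_{-6-k}^2L_{-3-k}L_{-2-k}$ becomes leading with nonzero coefficient, is precisely what your proposal takes for granted; the four exceptional partitions and the restriction $p\geq 3$ in the family $[p+2,p,p]$ show that such solvability is not automatic. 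Supplying these expansions and exhibiting the combinations (or an equivalent verification) is needed to complete the proof.
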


Theorem \ref{thm:main} and Theorem \ref{thm:partition-theorem} \ref{thm:part-a}
are colloraries of this Proposition:
\begin{proof}[Proof of Theorem \ref{thm:main}] Let $V = \Vir_{3,4}$. We define the differential ideal  \[ I' = \left( \frac{1}{6} L_{-5} L_{-2}^2 +  L_{-4}L_{-3}L_{-2}\right)_{\partial} \subset JR_V,\] and let $A = JR_V/I' $. This is a $\Z_{\geq 0}$ graded differential algebra $A = \oplus_{n \geq 0} A_n$ with graded dimension 
\[ \chi_A (q) = \sum_{n \geq 0} q^n \dim A_n. \]
Since the map \eqref{eq:isomvir} is surjective, we know $\dim A_n \geq \dim V_n$. It remains to show the other inequality. 
Due to Proposition \ref{prop:technical} we have $\dim A_n \leq |\cP(n)|$.
Theorem \ref{thm:q-ident} and Theorem \ref{thm:partition-theorem}
\ref{thm:part-c} imply that $|\cP(n)| = \dim V_n$. 
\end{proof}
\begin{proof}[Proof of Theorem \ref{thm:partition-theorem} \ref{thm:part-a}]
This follows from Theorem \ref{thm:main} since the character of $\Vir_{3,4}$ and
its associated graded $\gr \Vir_{3,4}$ coincide. 
\end{proof}

\begin{proof}[Proof of Proposition \ref{prop:technical}]
We will find the required elements of $I$ by taking appropriate combinations of derivatives of $a$ and $b$ in such a way as to cancel leading terms. The first step is to write formulas for $\partial^{(n)}a$ and $\partial^{(n)}b$ (here $\partial^{(n)} = \tfrac{\partial^n}{n!}$), keeping track of the first few leading terms and the polynomial dependence of their coefficients on $n$. Since our partitions \eqref{eq:horrible-set} have length up to and including $6$, we need to keep track of around $6$ leading terms. 

We prove by induction
\begin{subequations}
\begin{align}
\begin{split}
\partial^{(3k+9)}a &= L_{-5-k}^3 + 6L_{-6-k}L_{-5-k}L_{-4-k} + 3L_{-6-k}^2L_{-3-k} \\ 
&\quad + 3L_{-7-k}L_{-4-k}^2 + 6L_{-7-k}L_{-5-k}L_{-3-k}   
 + 6L_{-7-k}L_{-6-k}L_{-2-k} + \ldots, 
\end{split} \\ 
\begin{split}
\label{eq:subeq2a} \frac{1}{3}\partial^{(3k+10)}a &=L_{-6-k} L_{-5-k}^2 + L_{-6-k}^2L_{-4-k} + 2L_{-7-k}L_{-5-k}L_{-4-k} 
+ 2L_{-7-k}L_{-6-k}L_{-3-k} \\ &\quad + L_{-7-k}^2L_{-2-k} + L_{-8-k}L_{-4-k}^2  
+ 2L_{-8-k}L_{-5-k}L_{-3-k} + 2L_{-8-k}L_{-6-k}L_{-2-k} + \ldots,  
\end{split}\\ 
\begin{split}
\frac{1}{3}\partial^{(3k+11)}a &=L_{-6-k}^2 L_{-5-k} + L_{-7-k}L_{-5-k}^2 + 2L_{-7-k}L_{-6-k}L_{-4-k}  
+ L_{-7-k}^2L_{-3-k}\\
 &\quad  + 2L_{-8-k}L_{-5-k}L_{-4-k} + 2L_{-8-k}L_{-6-k}L_{-3-k}
 + 2L_{-8-k}L_{-7-k}L_{-2-k} + \ldots,
\end{split}
\end{align}
\label{eq:subeq2}
\end{subequations}
where ``$\ldots$'' means terms that are lower in the monomial order. For $0 \leq n \leq 8$ similar expressions for $\partial^{(n)}a$ can be computed by hand. Again we prove by induction
\begin{subequations}
\begin{align}
\begin{split}
\partial^{(3k+6)} b &= \frac{1}{6} \left( 19 k^3+150 k^2+389 k+330 \right) L_{-5-k}^3  \\
&\quad+ \left( 19 k^3+150 k^2+391 k+340 \right) L_{-6-k}L_{-5-k}L_{-4-k}  \\ 
&\quad+   \frac{1}{2} \left(19 k^3+150 k^2+395 k+376  \right) L_{-6-k}^2L_{-3-k}  \\ 
&\quad+ \frac{1}{2} \left( 19 k^3+150 k^2+395 k+344 \right)    L_{-7-k}L_{-4-k}^2  \\ 
&\quad+ \left( 19 k^3+150 k^2+397 k+370 \right)    L_{-7-k}L_{-5-k}L_{-3-k}  \\ 
&\quad+ \left( 19 k^3+150 k^2+403 k+448 \right)   L_{-7-k}L_{-6-k}L_{-2-k} +
\ldots, 
\label{eq:subeq4a}
\end{split}\\
\begin{split}
\partial^{(3k+7)}b &=
\frac{1}{2}(19k^3 + 169k^2 + 496k + 480) {L_{-6-k} L_{-5-k}^2}  \\
&\quad +\frac{1}{2}(19k^3 + 169k^2 + 498k + 496) {L_{-6-k}^2  L_{-4-k}}  \\
&\quad +\left(19k^3 + 169k^2 + 500k + 496 \right) {L_{-7-k} L_{-5-k} L_{-4-k}}  \\
&\quad +\left(19k^3 + 169k^2 + 504k + 544\right) {L_{-7-k} L_{-6-k} L_{-3-k}}  \\
&\quad +\frac{1}{2}(19k^3 + 169k^2 + 512k + 640) {L_{-7-k}^2 L_{-2-k}}  \\
&\quad +\frac{1}{2}(19k^3 + 169k^2 + 506k + 496) {L_{-8-k}  L_{-4-k}^2}  \\
&\quad +\left(19k^3 + 169k^2 + 508k + 528\right) {L_{-8-k} L_{-5-k} L_{-3-k}}  \\
&\quad +\left(19k^3 + 169k^2 + 514k + 624\right) {L_{-8-k} L_{-6-k} L_{-2-k}} +
\ldots , 
\end{split} \\
\begin{split}
\partial^{(3k+8)}b &=\frac{1}{2} \left(19 k^3+188 k^2+615 k+666\right)  L_{-6-k}^2L_{-5-k} \\
&\quad+   \frac{1}{2} \left(19 k^3+188 k^2+617 k+672\right)  L_{-7-k}L_{-5-k}^2  \\ 
&\quad+  \left( 19 k^3+188 k^2+619 k+694\right) L_{-7-k}L_{-6-k}L_{-4-k}  \\ 
&\quad+  \frac{1}{2} \left(19 k^3+188 k^2+625 k+760\right)   L_{-7-k}^2L_{-3-k}  \\ 
&\quad+  \left( 19 k^3+188 k^2+623 k+690 \right)   L_{-8-k}L_{-5-k}L_{-4-k}  \\ 
&\quad+ \left( 19 k^3+188 k^2+627 k+750 \right)    L_{-8-k}L_{-6-k}L_{-3-k}  \\ 
&\quad+  \left(19 k^3+188 k^2+635 k+870 \right)   L_{-8-k}L_{-7-k}
L_{-2-k}+ \ldots .
\end{split} 
\end{align}
\label{eq:subeq4}
\end{subequations}
Examining the expressions for $\partial^{(n)}a$ in equations 
\eqref{eq:subeq2} we see that for partitions $\lambda$ which contain $[p, p, p]$, etc. (the partitions in first line of \eqref{eq:horrible-set}), the monomials $L_\lambda$ are leading monomials of elements of $I$. From \eqref{eq:subeq2a} and \eqref{eq:subeq4a} we see that the polynomial 
\[ r_k =  \partial^{(3k+1)}b - \frac{1}{6} (19k^3+55k^2+48k+12) \partial^{(3k+4)}a, \]
has $L_{-4-k}^2L_{-2-k}$ as leading monomial.
Similarly the polynomials
\[ s_k =  \partial^{(3k+2)}b - \frac{1}{6}(19k^3+74k^2+91k+36)\partial^{(3k+5)}a,   \]
\[ t_k =  \partial^{(3k)}b - \frac{1}{6}(19k^3+36k^2+17k)\partial^{(3k+3)}a, \]
have $L_{-5-k}L_{-3-k}^2$ and  $L_{-4-k}L_{-3-k}L_{-2-k}$ respectively as leading monomials. These show that the monomials corresponding to all elements of length three in  \eqref{eq:horrible-set} are leading monomials of elements of $I$. We now define
\begin{align*}
u_0 &= 8 L_{-5} t_0 - 6 L_{-2}t_1, \\ 
\begin{split} u_{k+1} &=  
(2k+10)L_{-6-k} t_{k+1} - (2k+8) L_{-3-k} t_{k+2}  \\
&\quad -\frac{(2k+10)(3k+20)}{3}L_{-2-k} \partial^{(3k+10)}a,
\end{split} && k \geq 0,\\
\begin{split}
v_k &= \left( 11 k^3+318 k^2+3061 k+9426\right)L_{-2-k}t_{k+2}  
- \left(7 k^3+90 k^2+349 k+370\right) L_{-6-k} s_{k} \\ 
&\quad -\left(11 k^3+191 k^2+1029 k+1745\right) L_{-3-k}s_{k+1} 
-8 \left(k^3+19 k^2+121 k+255\right) L_{-4-k}r_{k+1} \\ 
&\quad +\frac{1}{3}\left(35 k^4+709 k^3+5075 k^2+14763 k+13690 \right) L_{-6-k}\partial^{(3k+5)}a \\
&\quad +\frac{8}{3} \left(k^4+26 k^3+254 k^2+1102 k+1785\right) L_{-3-k} \partial^{(3k+8)}a, 
\end{split} && k \geq 0 \\ 
w_k &= (k+2)L_{-6-k}r_k - (k+6)L_{-2-k}s_{k+1}, && k \geq 0 \\
 y_0 &= 42 L_{-5}r_0 - 84 L_{-2} \partial^{(7)}a - 12 L_{-2} r_1  + 108 L_{-6} t_0,  \\ 
\begin{split}
y_1&= -640 L_{-6}r_1 + 2584 L_{-3} r_2 - 4480 L_{-5} s_1 + \frac{81856}{3}
L_{-2} s_2\\ 
&\quad + 1216 L_{-7} t_1 
- 10304 L_{-4} t_2 + 72128 L_{-7}
\partial^{(6)}a  +\frac{112000}{3} L_{-6} \partial^{(7)}a,
\end{split} \\ 
\begin{split}
y_{k+2} &= 2(-k^4-23 k^3-189 k^2-657 k-810) L_{-7-k}r_{k+2}   \\
&\quad + 
(-7 k^4 - 162 k^3 - 1129 k^2 - 2198 k + 1360) L_{-4-k}r_{k+3} \\ 
& \quad -4 (k^4+28 k^3+289 k^2+1302 k+2160) L_{-6-k}s_{k+2} \\ 
& \quad +\frac{16 (13 k^4+384 k^3+3849 k^2+14962 k+16600)}{(k+4)} L_{-3-k}s_{k+3} \\ 
&\quad -(5 k^4+162 k^3+1911 k^2+7850 k+4880) L_{-8-k}t_{k+2}  \\ 
&\quad -(7 k^4 + 207 k^3 + 2265 k^2 + 10841 k + 19080)L_{-5-k}t_{k+3} \\ 
&\quad +\Bigl(21 k^5+691 k^4+8865 k^3 
+55173 k^2+165650 k+190800\Bigr) L_{-8-k} \partial^{(3k+9)}a  \\ 
&\quad -2 (k^5+47 k^4+901 k^3+8393 k^2+37218 k+62640) L_{-2-k}\partial^{(3k+15)}a  \\ 
&\quad +\frac{2}{3} \left(9 k^5+311 k^4+4253 k^3+28769 k^2 
+96258
k+127440\right)L_{-7-k}\partial^{(3k+10)}a 
\end{split} && k \geq 0.
\end{align*}
We see that the leading monomial of $u_k$, $v_k$, $w_k$ and $y_k$ are respectively
\[
\begin{aligned}
&L_{-5-k}^2L_{-2-k}^2, \qquad &L_{-6-k}^2L_{-3-k}L_{-2-k}, \\
&L_{-6-k}L_{-5-k}L_{-3-k}L_{-2-k}, \qquad  &L_{-8-k}L_{-7-k}L_{-4-k}^2.
\end{aligned}
\]
This shows that the monomials corresponding to all elements in the second group in \eqref{eq:horrible-set}, are leading monomials of elements in $I$. 
The leading monomial of 
\[ z_k = (k+6)L_{-2-k} y_{k+2}  - 32 \left(4 k^4+165 k^3+2427 k^2+14184 k+27440\right) L_{-8-k}L_{-7-k}r_k,\]
is $L_{-8-k}L_{-7-k}L_{-5-k}L_{-3-k}L_{-2-k}$, the infinite family of length five in \eqref{eq:horrible-set}. 
Defining 
\begin{align*}
e_1 &= 3L_{-2}s_0 - L_{-5} \partial^{(2)}a, \\
e_2 &=
L_{-6}y_0 + 384 L_{-2}^2 \partial^{(11)}a - 832 L_{-2}^2 s_2 - 12 L_{-7} u_0,  \\
e_3 &= 432 L_{-2}w_1 + 11520 L_{-7} L_{-6} b + 73 L_{-7} y_0 + 53088 L_{-7}^2
\partial^{(2)}a, \\ 
e_4 &= L_{-9}L_{-8}u_0 + 8 L_{-9}L_{-8}L_{-6}\partial^{(2)}a +
\frac{112}{1415040} L_{-2}^2 y_3, 
\end{align*}
we see that the leading monomials for these elements are respectively 
\[ 
\begin{aligned}
&L_{-5}L_{-4}L_{-2}L_{-2}, \qquad &L_{-7}L_{-6}L_{-4}L_{-2}L_{-2}, \\ 
&L_{-7}L_{-7}L_{-4}L_{-2}L_{-2}, \qquad   &L_{-9}L_{-8}L_{-6}L_{-4}L_{-2}L_{-2}.
\end{aligned}
\]
The four remaining cases in \eqref{eq:horrible-set}. 
\end{proof}
\begin{rem} In fact it can be shown that three of the four exceptional cases in \eqref{eq:horrible-set} already lie in the differential ideal $I$. For example 
\[
\frac{1}{204}\left( 3 L_{-2} \partial^2 b - 18 L_{-4}b - 19 L_{-5} \partial^2 a
- 88 L_{-6} \partial a - 60 L_{-7}a  \right)   =  L_{-5}L_{-4}L_{-2}L_{-2}.
\]
A similar computation shows that $L_{-7}L_{-6}L_{-4}L_{-2}^2$ and $L_{-9}L_{-8}L_{-6}L_{-4}L_{-2}^2$ lie in $I$. The smallest expressions we found for these generators are too long to fit in these pages.
\label{rem:in-ideal}
\end{rem}
\begin{cor} The set consisting of the infinite family of polynomials $\partial^{(k)}a$,
$r_k$, $s_k$, $t_k$, $u_k$, $v_k$, $y_k$, $z_k$, $k \geq 0$ together with the
four exceptional elements $e_i$, $i=1,\dots,4$,
forms a Gr\"{o}bner basis of the ideal 
\[ I = \left( L_{-2}^3, \frac{1}{6} L_{-5}L_{-2}^2 +  L_{-4}L_{-3}L_{-2}
\right)_{\partial} \subset \mathbb{C}[L_{-2},L_{-3},\dots]. \]
\label{cor:grobner}
\end{cor}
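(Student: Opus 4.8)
The plan is to rely on the standard characterization of Gröbner bases: a subset $G \subset I$ is a Gröbner basis of $I$ for the grevlex order exactly when the monomial ideal $M := \langle \LT(g) : g \in G \rangle$ of leading monomials equals the full leading term ideal $\LT(I)$, in which case $G$ automatically generates $I$ as well. Because the grading by $\deg$ has finite-dimensional homogeneous components, every step can be carried out degree by degree and the usual theory applies verbatim despite the infinitely many variables $L_{-2}, L_{-3}, \dots$. The first and essentially formal step is to note that each element of the proposed list lies in $I$: every one of $\partial^{(k)}a$, $r_k$, $s_k$, $t_k$, $u_k$, $v_k$, $y_k$, $z_k$ and the $e_i$ is, by its very construction, a $\mathbb{C}[L_{-2},L_{-3},\dots]$-linear combination of the derivatives $\partial^{(n)}a$ and $\partial^{(n)}b$, hence belongs to the differential ideal $I = (a,b)_\partial$. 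In particular $G \subseteq I$ and therefore $M \subseteq \LT(I)$.

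The heart of the argument is then a Hilbert-series squeeze. The leading-monomial computations in the proof of Proposition \ref{prop:technical} realize every partition of \eqref{eq:horrible-set} as $\LT(g)$ for some $g$ in the list, so $L_\mu \in M$ whenever $\mu$ contains one of the forbidden patterns; consequently the standard monomials of $M$ are contained in $\{L_\lambda : \lambda \in \cP\}$, giving $\dim(\mathbb{C}[L_{-2},\dots]/M)_n \leq |\cP(n)|$. On the other hand a monomial ideal shares the Hilbert function of its parent, so $\dim(\mathbb{C}[L_{-2},\dots]/\LT(I))_n = \dim(\mathbb{C}[L_{-2},\dots]/I)_n$, and by Theorem \ref{thm:main} this quotient is $\gr_F \Vir_{3,4}$, whose graded dimension equals $\dim(\Vir_{3,4})_n = |\cP(n)|$ by Theorem \ref{thm:partition-theorem}. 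Since the inclusion $M \subseteq \LT(I)$ exhibits $\mathbb{C}[L_{-2},\dots]/\LT(I)$ as a quotient of $\mathbb{C}[L_{-2},\dots]/M$, we also get the reverse inequality $\dim(\mathbb{C}[L_{-2},\dots]/M)_n \geq |\cP(n)|$. Both graded dimensions are thus exactly $|\cP(n)|$, and combined with $M \subseteq \LT(I)$ in each finite-dimensional degree this forces $M = \LT(I)$, whence $G$ is a Gröbner basis.

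The routine but laborious bookkeeping of leading monomials has already been done in Proposition \ref{prop:technical}, so for the corollary itself the single genuine obstacle is \emph{completeness}: one must verify that the listed family realizes every pattern of \eqref{eq:horrible-set}, with none left uncovered, since a single missing pattern would leave in $M$ a standard monomial that the dimension count shows cannot be there, and the squeeze would collapse. Here I would tabulate the correspondence explicitly — the five length-three families (from $\partial^{(n)}a$, $r_k$, $s_k$, $t_k$), the four length-four families, the length-five family (from $z_k$), and the four sporadic partitions (from $e_1,\dots,e_4$) — paying particular attention to the length-four pattern $[p+4,p+3,p+1,p]$, whose monomial $L_{-6-k}L_{-5-k}L_{-3-k}L_{-2-k}$ is divisible by no other leading monomial on the list and so must be supplied by the element $w_k$ built in the proof of Proposition \ref{prop:technical}; I would therefore include $w_k$ among the basis elements. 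Once coverage is secured, the argument needs no verification of $S$-polynomial reductions whatsoever: the equality of Hilbert functions does all the work.
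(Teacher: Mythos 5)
Your proposal is correct and is essentially the paper's intended argument: the corollary is deduced from Proposition \ref{prop:technical} together with the dimension count $\dim(\mathbb{C}[L_{-2},L_{-3},\dots]/I)_n = |\cP(n)|$ supplied by Theorems \ref{thm:main} and \ref{thm:partition-theorem}, exactly via the Hilbert-function squeeze you describe, with no $S$-polynomial computations. Your observation about $w_k$ is a genuine catch: its leading monomial $L_{-6-k}L_{-5-k}L_{-3-k}L_{-2-k}$, realizing the pattern $[p+4,p+3,p+1,p]$ of \eqref{eq:horrible-set}, is divisible by no other leading monomial in the stated list, so the corollary as printed omits a needed family and $w_k$ must indeed be added for the squeeze (and hence the Gr\"obner property) to go through.
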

\section{Conclusion}
In this article we proved three new $q$-series identities in Theorem \ref{thm:modules}. These imply that linear combinations of Nahm sums for a given matrix may be modular invariant, even when no single summand is. As a consequence of one of these identities we described the singular support of the Ising model vertex algebra. We showed that it is, in a differential sense, a \emph{hypersurface} of the arc space of its associated scheme, that is, it is defined as the zero set of a single equation and all of its derivatives. This is the first known example of a vertex algebra whose singular support is a proper subscheme of the arc space of its associated scheme defined by finitely many equations and their derivatives. We have also found an explicit basis for the Ising model consisting on monomials in the Virasoro Lie algebra generators applied to the vacuum vector. 

The methods of this article can be applied to other Virasoro minimal models and their modules to produce new $q$-series identities and their combinatorial interpretations. 
\def\cprime{$'$}

\end{document}